\theoremstyle{plain}
 \newtheorem{theorem}{Theorem}[section]
 \newtheorem*{theorem*}{Theorem}
 \newtheorem{lemma}[theorem]{Lemma}
 \newtheorem{corollary}[theorem]{Corollary}
\theoremstyle{definition}
\theoremstyle{remark}
 \newtheorem{remark}[theorem]{Remark}
 \newtheorem{example}[theorem]{Example}
\def\N{\mathbb{N}}
\def\P{\mathbb{P}}
\def\Z{\mathbb{Z}}
\def\cF{\mathcal{F}}
\def\cG{\mathcal{G}}
\def\cL{\mathcal{L}}
\def\cO{\mathcal{O}}
\def\cQ{\mathcal{Q}}
\def\D{\Delta}
\def\g{\gamma}
\def\r{\rho}
\def\Om{\Omega}
\def\to{\rightarrow}
\DeclareMathOperator{\Mod}{Mod}
\DeclareMathOperator{\Alg}{Alg}
\DeclareMathOperator{\Set}{Set}
\DeclareMathOperator{\HS}{HS}
\DeclareMathOperator{\UHS}{\mathds{HS}}
\DeclareMathOperator{\Hom}{Hom}
\DeclareMathOperator{\Sym}{Sym}
\DeclareMathOperator{\gr}{gr}
\DeclareMathOperator{\id}{id}
\DeclareMathOperator{\Der}{Der}
\DeclareMathOperator{\Coh}{Coh}
\DeclareMathOperator{\Spec}{Spec}
\newcommand{\dual}[1]{#1^\vee} 
\begin{document}

  \title{Higher derivations of modules and the Hasse-Schmidt module}
  
  \author{Christopher Chiu}
  
  \address[C.\ Chiu]{%
    Fakult\"at f\"ur Mathematik\\
    Universit\"at Wien\\
    Oskar-Morgenstern-Platz 1\\
    A-1090 Wien (\"Osterreich)%
  }
  \email{christopher.heng.chiu@univie.ac.at}

  \author{Luis Narv\'aez Macarro}
  
  \address[L. Narv\'aez Macarro]{
  Departamento de \'Algebra \& Instituto de Matem\'aticas (IMUS)\\
    Facultad de Matem\'aticas, 
    Universidad de Sevilla\\
    Calle Tarfia s/n\\
    E-41012 Sevilla (Espa\~na)
  }
  \email{narvaez@us.es}
  
  \subjclass[2010]{Primary 13N05, 13N15; Secondary 14F10.}
  \keywords{Higher derivations of modules, Hasse-Schmidt derivations, Jet and Arc spaces.}

  \thanks{
   The research of the first author was supported by project P-31338 of the Austrian Science Fund (FWF). The research of the second author was partially supported by MTM2016-75027-P and FEDER.}
  
  \begin{abstract}
   In this paper we revisit Ribenboim's notion of higher derivations of modules and relate it to the recent work of De Fernex and Docampo on the sheaf of differentials of the arc space. In particular, we derive their formula for the K\"ahler differentials of the Hasse-Schmidt algebra as a consequence of the fact that the Hasse-Schmidt algebra functors commute.
  \end{abstract}

  \maketitle

Higher derivations of modules were introduced in \cite{rib_higher_der} in analogy to higher derivations of rings to provide a similar notion of a map carrying ``infinitesimal information''. In particular, 
there too exists a universal object parametrizing such higher derivations of modules, which we call the \emph{Hasse-Schmidt module}. This construction was implicitly considered in \cite{def_doc_differentials} when establishing a formula for the sheaf of differentials of jet and arc spaces. More precisely, the main statement behind the aforementioned formula in the affine case is the following:

\begin{theorem*}[\cite{def_doc_differentials}, Theorem 5.3]
 Let $A$ be a $k$-algebra. For $n\in\N\cup\{\infty\}$ denote by $\UHS_k^n(A)$ the $n$-th Hasse-Schmidt algebra of $A$. Then there exists an $A$-module $Q_n$ such that
 \[
  \Om_{\UHS_k^n(A)/k}=\Om_{A/k}\otimes_A Q_n.
 \]
\end{theorem*}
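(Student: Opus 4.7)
The plan is to exploit Ribenboim's Hasse--Schmidt module construction $\UHS^n(M)$, which represents the functor of higher $k$-derivations of $M$ into $\UHS_k^n(A)$-modules. The desired statement follows from two structural facts: that $\UHS^n$ is a representable functor of the form $(-) \otimes_A Q_n$, and that $\UHS^n$ applied to $\Om_{A/k}$ coincides with $\Om_{\UHS_k^n(A)/k}$. The abstract's reference to the ``Hasse--Schmidt functors commuting'' corresponds precisely to this second isomorphism.

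First I would recall, following Ribenboim, the definition of a length-$n$ higher derivation of an $A$-module $M$ into an $\UHS_k^n(A)$-module $N$: a sequence $(D_0, D_1, \ldots, D_n)$ of $k$-linear maps $D_i \colon M \to N$ with $D_0$ the canonical map and satisfying a Leibniz-type rule relating $D_i(am)$ to the components of the universal Hasse--Schmidt derivation $d = (d_0, \ldots, d_n)$ of $A$. This functor in $N$ is representable by an $\UHS_k^n(A)$-module $\UHS^n(M)$, constructed in the standard way as an appropriate quotient of a free module on symbols $D_i(m)$.

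Next I would verify that $M \mapsto \UHS^n(M)$ is a colimit-preserving additive functor from $A$-modules to $\UHS_k^n(A)$-modules: its defining relations are $k$-linear in $M$, so $\UHS^n$ preserves arbitrary direct sums, and right exactness follows from the categorical form of the universal property. The Eilenberg--Watts theorem then supplies a natural isomorphism
\[
  \UHS^n(M) \cong M \otimes_A Q_n,
\]
where $Q_n := \UHS^n(A)$, viewed as an $(\UHS_k^n(A), A)$-bimodule.

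The main step, and the technical heart of the argument, is the identification $\UHS^n(\Om_{A/k}) \cong \Om_{\UHS_k^n(A)/k}$. I would prove it by comparing universal properties: for any $\UHS_k^n(A)$-module $N$, a $k$-derivation $\delta \colon \UHS_k^n(A) \to N$ should correspond, via restriction along the structural map $A \to \UHS_k^n(A)$ and composition with the components $d_i$, to a length-$n$ higher $k$-derivation of $A$ into $N$. The hard part will be checking that this correspondence is bijective -- equivalently, that the Leibniz identities for the $D_i$ on $M = A$ match exactly the relations needed for $\delta$ to be well-defined on generators of $\UHS_k^n(A)$. This should drop out formally from the fact that $\UHS_k^n(A)$ is itself universal among $k$-algebras receiving an HS-derivation from $A$. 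Combining the two identifications yields
\[
  \Om_{\UHS_k^n(A)/k} \cong \UHS^n(\Om_{A/k}) \cong \Om_{A/k} \otimes_A Q_n,
\]
as required.
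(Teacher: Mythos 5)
Your outline is correct, but it takes a genuinely different route from the paper. The paper never works directly with derivations out of $\UHS^n_k(A)$: it encodes $\Om_{B/k}$ via the identity $\Sym_B(\Om_{B/k})\simeq \UHS^1_k(B)$, proves $\Sym_{A_n}(\UHS^n_{A/k}(M))\simeq \UHS^n_k(\Sym_A(M))$ (\cref{t:hs-sym-alg}), and then obtains your ``main step'' $\UHS^n_{A/k}(\Om_{A/k})\simeq \Om_{A_n/k}$ as the degree-one piece of the commutation isomorphism $\UHS^n_k(\UHS^1_k(A))\simeq \UHS^1_k(\UHS^n_k(A))$ of \cref{l:hs-alg-bigrad}; see \cref{c:hs-cotangent}. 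Likewise, where you invoke Eilenberg--Watts to get $\UHS^n(M)\simeq M\otimes_A Q_n$ with $Q_n=\UHS^n(A)$ known only abstractly, the paper identifies $Q_n$ explicitly as the dual bimodule $\dual{(A_n[[t]]_n)}$ (a colimit of such duals when $n=\infty$, via \cref{c:dual-hom-formula-inf}), which is what makes \cref{r:dual-isomorphic} and the freeness/projectivity statements immediate. Your argument is closer to the original de Fernex--Docampo proof and is more elementary; the paper's buys the conceptual statement that the whole formula is forced by the commutation of Hasse--Schmidt functors, at the cost of the graded machinery of Section 1. Both are valid.

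One caution on the step you call the hard part: the bijection $\Der_k(A_n,N)\simeq \Der_k(A,N[[t]]_n)$ does \emph{not} drop out formally from the universal property of $A_n$ as literally stated, since that property governs algebra maps out of $A_n$, not derivations. To make it formal, pass through the square-zero extension: $k$-derivations $A_n\to N$ are sections of $A_n\oplus N\epsilon\to A_n$; the universal property identifies $\Hom_{\Alg_k}(A_n,A_n\oplus N\epsilon)$ with $\Hom_{\Alg_k}(A,A_n[[t]]_n\oplus N[[t]]_n\epsilon)$; and the sections lying over $\g_A$ are exactly the $k$-derivations $A\to N[[t]]_n$ for the $A$-structure induced by $\g_A$. (This square-zero trick is precisely the degree-$\leq 1$ shadow of the paper's $\Sym$ argument.) Alternatively, the check can be done by hand on the presentation \cref{e:hs-alg-presentation}, whose three families of relations correspond exactly to additivity, the Leibniz rule over $d_A$, and $k$-linearity of the components $D_i=\delta\circ d_{A,i}$. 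Either way the step closes, so there is no gap, only a justification to be supplied.
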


As the $n$-th Hasse-Schmidt algebra $\UHS_k^n(A)$ parametrizes infinitesimal data on $A$ up to order $n$ (i.e. \emph{$n$-jets} on $A$), this formula suggests that tangents (i.e. infinitesimal data up to order $1$) of $n$-jets on $A$ can be recovered from some higher order operation on tangents on $A$. We aim to make this idea precise by observing that $\Om_{A/k}\otimes_A Q_n$ is just the $n$-th Hasse-Schmidt module of $\Om_{A/k}$. This was essentially proven in \cite{def_doc_differentials} but missing in \cite{rib_higher_der}; we will derive it here by considering $Q_n$ as a dual bimodule, respectively, as a colimit of duals in the case $n=\infty$. The formula itself can thus be seen as a consequence of the (elementary) fact that the Hasse-Schmidt algebra functors commute, see \cref{l:hs-alg-bigrad}. The final ingredient is the following theorem, which connects the Hasse-Schmidt algebra to the Hasse-Schmidt module by means of the symmetric algebra.

\begin{theorem*}
 Let $M$ be an $A$-module and denote by $\UHS^n_{A/k}(M)$ the $n$-th Hasse-Schmidt module of $M$. Then
 \[
  \Sym_{A_n}(\UHS^n_{A/k}(M))\simeq \UHS^n_k(\Sym_A(M)).
 \]
\end{theorem*}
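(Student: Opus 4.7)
The plan is to establish the isomorphism by a universal-property argument, showing that both sides corepresent the same set-valued functor on commutative $k$-algebras. Write $A_n := \UHS^n_k(A)$ and, for a $k$-algebra $C$, set $C^{(n)} := C[t]/(t^{n+1})$. By the defining adjunction of the Hasse-Schmidt algebra, a $k$-algebra map $\UHS^n_k(\Sym_A(M)) \to C$ corresponds to a $k$-algebra map $\Sym_A(M) \to C^{(n)}$, which in turn (by the universal property of the symmetric algebra) is a pair $(\alpha, \beta)$ consisting of a $k$-algebra map $\alpha \colon A \to C^{(n)}$ and an $A$-linear map $\beta \colon M \to C^{(n)}$, where the $A$-module structure on $C^{(n)}$ comes from $\alpha$.

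Conversely, by the adjunction between $\Sym_{A_n}$ and the forgetful functor, a $k$-algebra map $\Sym_{A_n}(\UHS^n_{A/k}(M)) \to C$ is the data of a $k$-algebra map $\varphi \colon A_n \to C$ together with an $A_n$-linear map $\UHS^n_{A/k}(M) \to C_\varphi$, where $C_\varphi$ denotes $C$ viewed as an $A_n$-module via $\varphi$. By the universal property of $A_n$, the datum of $\varphi$ is equivalent to that of a $k$-algebra map $\alpha \colon A \to C^{(n)}$; and the universal property of the Hasse-Schmidt module, as developed earlier in the paper, identifies $A_n$-linear maps $\UHS^n_{A/k}(M) \to C_\varphi$ with $A$-linear maps $M \to C^{(n)}$, where the $A$-action on the target is the one induced by $\alpha$ through the universal higher derivation $A \to A_n[t]/(t^{n+1})$. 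Matching the two descriptions yields a natural isomorphism of functors, hence of their representing objects.

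The main obstacle I anticipate is precisely the module analogue of the defining adjunction used above, namely that $A_n$-linear maps out of $\UHS^n_{A/k}(M)$ into $C_\varphi$ correspond canonically to $A$-linear maps $M \to C^{(n)}$ with the $A$-structure on $C^{(n)}$ induced by the higher derivation underlying $\varphi$. Once this module-level adjunction is in place as part of the foundational theory of the Hasse-Schmidt module, the theorem reduces to the formal chain of bijections sketched above; the only remaining subtlety is verifying that the two $A$-module structures on $C^{(n)}$ arising on either side genuinely coincide, which is a direct consequence of the compatibility built into the construction of $\UHS^n_{A/k}(M)$.
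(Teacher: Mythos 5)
Your argument is correct in outline and, like the paper's, is a Yoneda-style comparison of corepresented functors, but the two chains of bijections are organized differently. You test both sides against arbitrary ungraded $k$-algebras $C$ and decompose a $k$-algebra map out of either symmetric algebra into a pair consisting of an algebra map and a semilinear module map, matching the pairs via the Hasse--Schmidt algebra adjunction for $A$ together with the representability of $\HS^n_{d^n_A}(M,-)$; the latter is exactly the ``module-level adjunction'' you flag as the main obstacle, and it is supplied by \cref{t:universal-hs-module} (take $N=C$ there and compose with the extension--restriction adjunction to get $\Hom_{A_n}(\UHS^n_{A/k}(M),C_\varphi)\simeq\Hom_A(M,C[[t]]_n)$). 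The paper instead tests against $\N$-graded $A_n$-algebras $B$, extracts the degree-one part $B_1$, and closes the loop with the \emph{graded} refinement of the Hasse--Schmidt adjunction (\cref{t:funct-gr-hs-alg}). That graded route is what lets the paper assert the stronger conclusion actually stated in \cref{t:hs-sym-alg}, namely that the isomorphism is one of $\N$-graded $A_n$-algebras (symmetric-algebra grading versus induced grading); your ungraded Yoneda argument yields a priori only a $k$-algebra isomorphism, and you would still need to observe that the bijections commute with the projection to $\Hom_{\Alg_k}(A_n,C)$ (to get an isomorphism over $A_n$) and that generators of degree one go to generators of degree one (to get gradedness). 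On the other hand, your route has a genuine advantage: every universal property you invoke is stated for all $n\in\N\cup\{\infty\}$, so your argument applies verbatim to $n=\infty$, whereas the graded adjunction of \cref{t:funct-gr-hs-alg} fails for $n=\infty$ and the paper must handle that case by a separate colimit argument over finite $n$. For the statement as quoted, which asks only for an algebra isomorphism, your proof is complete once these bookkeeping points are written out.
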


Similar to how Hasse-Schmidt algebras of finite order can be glued to give the jet space $X_n$ of an algebraic variety $X$, gluing Hasse-Schmidt modules gives a sheaf of modules over $X_n$. In particular, this construction yields a whole class of ``universal'' vector bundles on $X_n$.

The basic outline of this paper is as follows: in \cref{s:hs-algebras} we will collect well-known facts about higher derivations and extend them to graded higher derivations. In \cref{s:dual-bimodules} we will briefly touch on the notion of dualizable bimodules. In \cref{s:hs-modules} we discuss higher derivations of modules and the Hasse-Schmidt module as well as its relationship to the Hasse-Schmidt algebra, including proving the aforementioned result as \cref{t:hs-sym-alg}. Finally, in \cref{s:univ-vec-bundles} we will briefly talk about globalizing the construction in \cref{s:hs-modules} to obtain sheaves on jet and arc spaces.

Throughout this paper, $k$ will denote an arbitrary commutative ring. If $R$ is any other ring, we will write $R[[t]]_n:=R[[t]]/(t^{n+1})$ for $n\in \N$ and $R[[t]]_\infty=R[[t]]$. The category of $k$-algebras will be denoted by $\Alg_k$; furthermore, we write $\Alg_{k,\gr}$ for the category of $\N$-graded $k$-algebras (where we regard $k$ with the trivial grading). A morphism in $\Alg_{k,\gr}$ will always be a graded map of degree $0$.

\section{Higher derivations and the Hasse-Schmidt algebra}

\label{s:hs-algebras}

In this section we will briefly recall some well-established facts about higher derivations (also called Hasse-Schmidt derivations) and the universal object associated to them, which we call the Hasse-Schmidt algebra and denote by $\UHS_k^n(A)$. Our main references here are \cite{vojta} and \cite{rib_hs_I}, \cite{rib_hs_II}. Furthermore, we will extend these definitions to graded rings and show that the corresponding universal object can be obtained by endowing $\UHS_k^n(A)$ with an induced (natural) grading.

\subsection{Higher derivations}

If $A$, $C$ are $k$-algebras and $n\in \N\cup\{\infty\}$, then a higher derivation $D=(D_i)_{i=0}^n:A\to C$ of order $n$ is a collection of $k$-linear maps $D_i:A\to C$ such that $D_0$ is a map of $k$-algebras and the \emph{higher Leibniz rules} are satisfied:
\[
 D_i(ab)=\sum_{k+l=i} D_k(a)D_l(b).
\]
We write $D\in \HS^n_k(A,C)$. There exist bijections
\begin{equation}
\label{e:nat-bij-hs}
 \HS^n_k(A,C)\simeq \Hom_{\Alg_k}(A,C[[t]]_n),
\end{equation}
which are natural in $A$ and $C$. The image of $D$ under this map will be denoted by $\g_D$.

If $C$ has in addition an $A$-algebra structure, then we write $\HS_{A/k}^n(A,C)$ for the subset of all higher derivations $D$ such that $D_0:A \to C$ is the structure map $a\mapsto a\cdot 1_C$. The natural bijections \cref{e:nat-bij-hs} restrict to
\begin{equation}
 \label{e:nat-bij-hs-res}
  \HS_{A/k}^n(A,C) \simeq \Hom_{\Alg_k}^\circ(A,C[[t]]_n),
\end{equation}
where the right-hand side denotes the subset of $k$-algebra maps $\g: A\to C[[t]]_n$ such that $\g$ modulo $(t)$ equals the structure map $A\to C$.

For any $k$-algebra $A$ the functor $\HS^n_k(A,-)$ is representable by a $k$-algebra $\UHS^n_k(A)$, the Hasse-Schmidt algebra, which comes equipped with a universal higher derivation $d_A = (d_{A,i}): A\to \UHS_k^n(A)$. The map $A\to \UHS_k^n(A)[[t]]_n$ corresponding to $d_A$ under \cref{e:nat-bij-hs} will be denoted by $\g_A$. By definition for every $k$-algebra $C$ there exists natural bijections
\begin{equation}
\label{e:nat-bij-hs-alg}
 \Hom_{\Alg_k}(\UHS_k^n(A),C) \simeq \HS_k^n(A,C) \simeq \Hom_{\Alg_k}(A,C[[t]]_n),
\end{equation}
given by
$$
\varphi \in \Hom_{\Alg_k}(\UHS_k^n(A),C) \longmapsto \widetilde{\varphi} \circ \gamma_A \in \Hom_{\Alg_k}(A,C[[t]]_n),
$$
where $\widetilde{\varphi}: \UHS_k^n(A)[[t]]_n \to C[[t]]_n$ is the $t$-linear extension of $\varphi $.
We write $\varphi_D$ for the map $\UHS_k^n(A)\to C$ corresponding to a higher derivation $D\in \HS_k^n(A,C)$: $D_i = \varphi_D \circ d_{A,i}$ for all $i$.
For convenience's sake we will often write $A_n:=\UHS^n_k(A)$.

Similarly, for every $A$-algebra $C$ we obtain bijections
\begin{equation}
\label{e:nat-bij-hs-alg-res}
 \Hom_{\Alg_A}(\UHS_k^n(A),C) \simeq \HS_{A/k}^n(A,C) \simeq \Hom_{\Alg_k}^\circ(A,C[[t]]_n),
\end{equation}
which are natural in $C$.

\begin{remark}
 \label{r:hs-alg-funct}
 The assignment $A \mapsto \UHS^n_k(A)$ yields a functor $\Alg_k \to \Alg_k$; in particular, any $k$-algebra map $f: A\to A'$ gives rise to a natural map $f_n: \UHS^n_k(A) \to \UHS^n_k(A')$ such that $f_n \circ d_{A,i} = d_{A',i}$ for all $i$.
\end{remark}

\begin{remark}
 \label{r:truncation-maps}	
 For $m>n$ there exist natural maps $\pi_{m,n}^*: \UHS_k^n(A)\to \UHS_k^m(A)$ obtained from the truncations
 \[
  C[[t]]_m\to C[[t]]_n.
 \]
 We will refer to the maps $\pi_{m,n}^*$ as \emph{co-truncation maps}.
\end{remark}

\begin{remark}
 \label{r:hs-alg-direct-limit}	
  The co-truncation maps $\pi_{m,n}^*$ give rise to a directed system and we have $\UHS_k^\infty(A)\simeq \varinjlim_n \UHS_k^n(A)$. Indeed, this follows directly from the universal property of the Hasse-Schmidt algebra, as we have natural bijections
  \[
    \Hom_{\Alg_k}(A,C[[t]])\simeq \Hom_{\Alg_k}(A,\varprojlim_n C[[t]]_n)\simeq \varprojlim_n \Hom_{\Alg_k}(A,C[[t]]_n).
  \]
\end{remark}

The Hasse-Schmidt algebra $\UHS^n_k(A)$ can be constructed as a quotient of the polynomial algebra
\[
 A[x^{(i)} \mid x\in A,\:i=1,\ldots,n]
\]
by the ideal generated by
\begin{align}
\label{e:hs-alg-presentation}
 (x+y)^{(i)} - x^{(i)} - y^{(i)}, \:x,y\in A \\
 (xy)^{(i)}-\sum_{k+l=i} x^{(k)}y^{(l)}, \:x,y\in A \notag \\
 a^{(i)}, \:a\in k, \notag
\end{align}
for $i=1,\ldots,m$; note that we identify $x^{(0)}$ with $x\in A$. See \cite{vojta} for more details. In this presentation the universal higher derivation $d^n_A$ is given by
\[
 d_A(x)=\sum_{i=0}^n x^{(i)} t^i.
\]

\begin{remark}
 \label{r:nat-gr-hs-alg}
 There exists a natural $\N$-grading of $\UHS^n_k(A)$ given by $\deg(x^{(i)})=i$ for $x\in A$, which we will refer to as the \emph{structural grading} of $\UHS^n_k(A)$. Indeed, notice that the system of equations (\ref{e:hs-alg-presentation}) is homogeneous with respect to $\deg(x^{(i)})=i$. Moreover, for any $k$-algebra map $f: A\to A'$ the natural map $f_n: \UHS^n_k(A) \to \UHS^n_k(A')$ from \cref{r:hs-alg-funct} is graded.
\end{remark}

\subsection{Graded higher derivations}

Let us first consider the case $n\in\N$, i.e. of higher derivations of finite order. Let $k$ be a ring and $A=\bigoplus_{i\in\N} A_i$ and $C=\bigoplus_{i\in\N} C_i$ graded $k$-algebras (where we regarded $k$ with the trivial grading; in particular the sets of homogeneous elements $A_i$ and $C_i$ are $k$-modules). We call a higher derivation $D=(D_i)_i:A\to C$ of order $n$ \emph{graded} if every component $D_i$ is graded (of degree $0$), that is, we have $D_i(A_j)\subset C_j$ for all $j\in\N$. The set of all such $D$ will be denoted by $\HS_{k,\gr}^n(A,C)$. Note that $C[[t]]_n$ is a free $C$-module of rank $n+1$ and thus carries a natural grading induced by $C$. It is then immediate that the natural isomorphism \cref{e:nat-bij-hs} restricts to
\[
 \HS_{k,\gr}^n(A,C)\simeq \Hom_{\Alg_{k,\gr}}(A,C[[t]]_n).
\]
We claim that there exists an $\N$-grading on $\UHS_k^n(A)$, different from the structural grading introduced in \cref{r:nat-gr-hs-alg}, such that, for every $\N$-graded $k$-algebra $C$, the natural bijections \cref{e:nat-bij-hs-alg} restrict to natural bijections
\[
\Hom_{\Alg_{k,\gr}}(\UHS_k^n(A),C)\simeq \HS_{k,\gr}^n(A,C) \simeq \Hom_{\Alg_{k,\gr}}(A,C[[t]]_n).
\]
In particular $\g_A:A\to \UHS_k^n(A)[[t]]_n$ will be map of graded $k$-algebras. We call this grading the \emph{induced grading} of $\UHS^n_k(A)$. To construct the induced grading, note that, if $A=\bigoplus_{i\in\N} A_i$, then the presentation given by \cref{e:hs-alg-presentation} can be refined such that $A$ is given as the quotient of
\[
 A[x^{(i)} \mid x\in A_j, \:j\in\N, \:i=1,\ldots,n]
\]
by the ideal generated by
\begin{align}
\label{e:hs-alg-presentation-gr}
 (x+y)^{(i)} - x^{(i)} - y^{(i)}, \:x,y\in A_j \\
 (xy)^{(i)}-\sum_{k+l=i} x^{(k)}y^{(l)}, \:x,y\in A_j \notag \\
 a^{(i)}, \:a\in k, \notag
\end{align}
for $i=1,\ldots,m$ and $j\in\N$. We define the induced grading by setting $\deg(x^{(i)}):=j$ for $x\in A_j$. Note that the system \cref{e:hs-alg-presentation-gr} is homogeneous with respect to this grading, so it is well-defined. The $k$-module $(\UHS_k^n(A))_i$ of elements of degree $i$ is generated by the set of products
\[
 \{x_1^{(j_1)}\cdots x_r^{(j_r)} \mid x_l\in A_{i_l},\: i_1+\ldots+i_r=i\}.
\]
Note that $(\UHS_k^n(A))_0=\UHS^n_k(A_0)$ and $(\UHS^n_k(A))_i$ is a module over $\UHS_k^n(A_0)$.

Now to see the claim above, if $\varphi_D: \UHS_k^n(A)\to C$ is graded, then $\varphi(x^{(j)})\in C_i$ for $x\in A_i$. The map $\g_D$ corresponding to $\varphi_D: A\to C[[t]]_n$ under \cref{e:nat-bij-hs-alg} is given by
\[
 x\mapsto \sum_{j=0}^n x^{(j)} t^j,
\]
and thus is graded. The other direction follows in analogy.

\begin{remark} 
\label{rem:0-part-grHS}
 Under the above hypotheses, for any $D\in \HS_{k,\gr}^n(A,C)$, we consider $D^0 = (D_i^0)_i$, with 
 $D_i^0: A_0 \to C_0$ the degree $0$ part of $D_i:A\to C$. It is clear that $D^0\in \HS_k^n(A_0,C_0)$.
\end{remark}

In fact, taking the Hasse-Schmidt algebra gives rise to a functor $\Alg_{k,\gr} \to \Alg_{k,\gr}$:

\begin{theorem}
\label{t:funct-gr-hs-alg}	
 Let $n\in \N$ and $A=\bigoplus_i A_i$ be a graded $k$-algebra (where we consider $k$ with the trivial grading). Then the assignment $A\mapsto \UHS_k^n(A)$ yields a functor $\Alg_{k,\gr}\to \Alg_{k,\gr}$ satisfying the following: for every $\N$-graded $k$-algebra $C$ there exist bijections
 \begin{equation}
 \label{e:nat-bij-gr-hs-alg}
  \Hom_{\Alg_{k,\gr}}(\UHS_k^n(A),C)\simeq\Hom_{\Alg_{k,\gr}}(A,C[[t]]_n),
 \end{equation}
 natural in $A$ and $C$.
\end{theorem}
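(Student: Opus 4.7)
The plan is to leverage the graded presentation \cref{e:hs-alg-presentation-gr} already set up before the theorem. Since the defining relations are homogeneous under the assignment $\deg(x^{(i)}) := j$ for $x \in A_j$, the grading of the polynomial algebra descends to a well-defined $\N$-grading on $\UHS^n_k(A)$, namely the induced grading. The bulk of the theorem is then verifying that it corepresents graded higher derivations, with functoriality following as a corollary.

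For the bijection \cref{e:nat-bij-gr-hs-alg}, I would start from the non-graded bijection \cref{e:nat-bij-hs-alg} and show it restricts correctly to graded morphisms on both sides. Given a graded morphism $\varphi: \UHS^n_k(A) \to C$, its image $\widetilde\varphi \circ \gamma_A$ sends $x \in A_j$ to $\sum_{i=0}^n \varphi(x^{(i)}) t^i$; each generator $x^{(i)}$ has induced degree $j$, so $\varphi(x^{(i)}) \in C_j$, placing the sum in the degree-$j$ part of $C[[t]]_n$ (where $t$ is of degree $0$). Hence $\widetilde\varphi \circ \gamma_A$ is graded. Conversely, given graded $\gamma: A \to C[[t]]_n$, extracting coefficients yields a higher derivation $D = (D_i)$ with each $D_i: A \to C$ graded of degree $0$; the corresponding $\varphi_D: \UHS^n_k(A) \to C$ sends the generator $x^{(i)}$ (of induced degree $j$ when $x \in A_j$) to $D_i(x) \in C_j$. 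Since such generators span $\UHS^n_k(A)$ as a $k$-algebra, $\varphi_D$ is graded. These assignments are mutually inverse by the non-graded bijection.

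Functoriality then follows: a graded $k$-algebra map $f: A \to A'$ induces $f_n: \UHS^n_k(A) \to \UHS^n_k(A')$ from \cref{r:hs-alg-funct}, and on generators this map sends $x^{(i)}$ to $f(x)^{(i)}$; since $f$ is graded, $\deg f(x) = \deg x = j$, so $f_n$ preserves the induced grading. Naturality in both $A$ and $C$ of \cref{e:nat-bij-gr-hs-alg} is inherited from the naturality of \cref{e:nat-bij-hs-alg}. I do not anticipate any real obstacle: once the graded presentation \cref{e:hs-alg-presentation-gr} is in place, the entire proof reduces to checking grading compatibility on generators. The deepest point being used is implicit, namely that the universal derivation $\gamma_A: A \to \UHS^n_k(A)[[t]]_n$ itself becomes a graded $k$-algebra map under the induced grading, which falls out as the case $\varphi = \id$ of the bijection above.
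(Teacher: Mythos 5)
Your proposal is correct and follows essentially the same route as the paper: construct the induced grading from the refined homogeneous presentation \cref{e:hs-alg-presentation-gr} and then check on generators that the ungraded bijection \cref{e:nat-bij-hs-alg} restricts to graded morphisms in both directions, with functoriality and naturality inherited from the ungraded case. No gaps.
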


\begin{remark}
 \label{r:funct-gr-hs-alg-res}
 Similarly, if $C$ is a graded $A$-algebra and one defines $\HS_{A/k,\gr}^n(A,C)$ to be the subset of graded higher derivations $D: A\to C$ such that $D_0:A\to C$ is the structure map, then there exist natural bijections
 \begin{equation}
  \Hom_{\Alg_{A,\gr}}(\UHS_k^n(A),C)\simeq \HS_{A/k,\gr}^n(A,C) \simeq \Hom^\circ_{\Alg_{k,\gr}}(A,C[[t]]_n).
 \end{equation}
 Note that $\UHS^n_k(A)$ is a graded $A$-algebra by construction.
\end{remark}

Let us now consider the case $n=\infty$. Clearly the co-truncation maps $\pi_{m,n}^*: \UHS_k^n(A)\to \UHS_k^m(A)$, $m>n$ preserve the gradings on both sides. By \cref{r:hs-alg-direct-limit} we obtain a grading on $\UHS_k^\infty(A)$ coming from its direct limit structure. However, note that there exists no result which is directly analogous to \cref{t:funct-gr-hs-alg}, since, for $C$ a graded $k$-algebra, the power series ring $C[[t]]$ does no longer carry a grading compatible with that of $C$ itself. In fact, one would need to interpret $\Hom_{\Alg_{k,\gr}}(A,C[[t]])$ as the set of all $k$-algebra maps $\varphi: A \to C[[t]]$ such that $\varphi(A_i) \subset C_i[[t]]$ for all $i\geq 0$.

\begin{remark}
\label{r:hs-alg-bigr}
 The structural grading of $\UHS^n_k(A)$ (see \cref{r:nat-gr-hs-alg}) together with its induced grading yields an $\N$-bigrading of $\UHS^n_k(A)$. Indeed, observe that in the presentation given by \cref{e:hs-alg-presentation-gr} this bigrading is defined via $\deg(x^{(j)})=(i,j)$ for $x\in A_i$. Moreover, for $f:A\to A'$ a map between graded $k$-algebras, the induced map $f_n: \UHS^n_k(A)\to \UHS^n_k(A')$ clearly respects the bigrading. Hence we can view $A\mapsto \UHS^n_k(A)$ as a functor from $\N$-graded $k$-algebras to $\N$-bigraded $k$-algebras.
\end{remark}

The following lemma states that the functors $\UHS^n_k(-)$ commute with each other. Moreover, the composition of two such functors can also be seen as a universal object for $2$-variate higher derivations; note that for $p$-variate higher derivations the corresponding Hasse-Schmidt algebra 
is $\N^p$-graded. For more details on the $p$-variate case and the proof of the lemma see \cite[Corollary 2.3.12]{nar_course}.

\begin{lemma}
 \label{l:hs-alg-bigrad}
 For all $n,m\in \N\cup\{\infty\}$ there are natural isomorphisms of bigraded $\UHS^n_k(A)$-algebras
 \begin{equation} \label{eq:HS-n-m}
  \UHS^n_k(\UHS^m_k(A))\simeq \UHS^{(n,m)}_k(A)\simeq \UHS^m_k(\UHS^n_k(A))^*,
 \end{equation}
 where, for any bigraded algebra $B$, we denote by $B^*$ the bigraded algebra obtained from interchanging the order of gradings on $B$. Moreover, if we call:
 \begin{itemize}
  \item $d^m_{A,i}:A \to \UHS^m_k(A)$, $i\leq m$, the components of the universal higher derivation $d^m_A \in \HS^m_k(A,\UHS^m_k(A))$, and
  \item $d^{(n,m)}_{A,(i,j)}: A \to  \UHS^{(n,m)}_k(A)$, $(i,j)\leq (n,m)$, the components of the universal higher derivation $d^{(n,m)}_A \in \HS^{(n,m)}_k(A,\UHS^{(n,m)}_k(A))$,
 \end{itemize}
then the isomorphisms in (\ref{eq:HS-n-m}) are determined by 
\[
 d^n_{\UHS^m_k(A),i} \circ d^m_{A,j} \equiv d^{(n,m)}_{A,(i,j)} \equiv d^m_{\UHS^n_k(A),j} \circ d^n_{A,i}.
\] 
\end{lemma}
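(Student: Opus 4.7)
The strategy is to apply the universal property of $\UHS^n_k(-)$ iteratively and identify the resulting iterated formal power series ring with a bivariate truncation. For finite $n,m \in \N$ and any $k$-algebra $C$, two applications of (\ref{e:nat-bij-hs-alg}) give
\[
 \Hom_{\Alg_k}(\UHS^n_k(\UHS^m_k(A)), C) \simeq \Hom_{\Alg_k}(\UHS^m_k(A), C[[t]]_n) \simeq \Hom_{\Alg_k}(A, C[[t]]_n[[s]]_m).
\]
The ring $C[[t]]_n[[s]]_m$ is canonically isomorphic to $C[s,t]/(s^{m+1}, t^{n+1})$, which one can denote $C[[s,t]]_{(n,m)}$ and interpret as the receiving algebra of $2$-variate higher derivations of bi-order $(n,m)$; by definition of $\UHS^{(n,m)}_k(A)$, Yoneda then yields the first isomorphism in (\ref{eq:HS-n-m}). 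Swapping the roles of $s$ and $t$, that is, first applying $\UHS^n_k$ and then $\UHS^m_k$, produces $\UHS^m_k(\UHS^n_k(A)) \simeq \UHS^{(m,n)}_k(A)$, which is exactly $\UHS^{(n,m)}_k(A)^*$; this is the second isomorphism.

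For the bigraded structure I would proceed as follows. The inner factor $\UHS^m_k(A)$ carries its structural grading (\cref{r:nat-gr-hs-alg}), so it lies in $\Alg_{k,\gr}$; applying the graded version \cref{t:funct-gr-hs-alg} endows $\UHS^n_k(\UHS^m_k(A))$ with the induced grading (tracking the inner structural degree $j$), while the outer $\UHS^n_k$ contributes its own structural grading of weight $i$. Together these yield an $\N$-bigrading under which $(x^{(j)})^{(i)}$ sits in bidegree $(i,j)$. On the other side, $\UHS^{(n,m)}_k(A)$ is bigraded by $\deg x^{(i,j)} = (i,j)$, which is well-defined thanks to the homogeneity of the $2$-variate Leibniz relations. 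A direct comparison of the two presentations shows that the Yoneda identification sends $(x^{(j)})^{(i)} \mapsto x^{(i,j)}$, and hence is bigraded; the starred version in (\ref{eq:HS-n-m}) accounts for the opposite order of the two structural weights on $\UHS^m_k(\UHS^n_k(A))$.

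The compatibility with universal derivations is obtained by chasing the identity of $\UHS^n_k(\UHS^m_k(A))$ through the bijections: the corresponding map $A \to \UHS^n_k(\UHS^m_k(A))[[s,t]]_{(n,m)}$ sends $x \mapsto \sum_{i,j}(d^n_{\UHS^m_k(A),i}\circ d^m_{A,j})(x)\, s^i t^j$, which under Yoneda must match $\sum_{i,j} d^{(n,m)}_{A,(i,j)}(x)\, s^i t^j$. Finally, for $n$ or $m$ infinite, I would invoke the fact that $\UHS^n_k(-)$ is left adjoint to $C \mapsto C[[t]]_n$ and hence commutes with directed colimits; combined with \cref{r:hs-alg-direct-limit} and the compatibility of the finite-order isomorphisms with the co-truncation maps, this passes the whole argument to the limit. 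The most delicate point in the plan is checking that the Yoneda isomorphism is indeed bigraded; this ultimately reduces to the combinatorial observation that the relations of (\ref{e:hs-alg-presentation-gr}) respect the assignment $(x^{(j)})^{(i)} \mapsto (i,j)$, after which everything else is formal bookkeeping.
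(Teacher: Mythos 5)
Your argument is correct, and it is essentially the expected one: the paper itself does not prove this lemma but defers to \cite[Corollary 2.3.12]{nar_course}, where the same strategy (iterating the adjunction $\Hom_{\Alg_k}(\UHS^n_k(B),C)\simeq\Hom_{\Alg_k}(B,C[[t]]_n)$, identifying the iterated truncated power series ring with the bivariate one, and applying Yoneda, with the bigrading and the infinite case handled exactly as you describe) is carried out. The only cosmetic slip is the labelling of the two formal variables in your final power-series expansion (the exponents of $s$ and $t$ are swapped relative to your earlier identification $C[[t]]_n[[s]]_m$), which does not affect the substance.
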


\section{Dualizable bimodules and limits}

\label{s:dual-bimodules}

Let $A$ be a ring and $M$ a (left) $A$-module, then the (right) $A$-module $\dual{M}:=\Hom_A(M,A)$ is commonly called the \emph{dual module} of $M$. In general, the dual of a module is not well-behaved, as for example $M$ might not be \emph{reflexive}, i.e. the natural map $M\to \dual{(\dual{M})}$ might not be an isomorphism. For a module $M$ to be \emph{dualizable} we will thus require it to have a dual object in the (stronger) categorical sense of \cite{dold_puppe}. In this section we will recall some well-known and elementary facts about dualizable (bi)modules, which will be used to establish the existence of the Hasse-Schmidt module in \cref{s:hs-modules}.

Let us start by fixing some notation. If $M$ and $N$ are both right $A$-modules, we denote the set of right $A$-homomorphisms $M \to N$ by $\Hom_{(-,A)}(M,N)$. Similarly, if $M$ and $N$ are left $B$-modules, we write $\Hom_{(B,-)}(M,N)$ for the set of left $B$-homomorphisms $M \to N$. This choice of notation will allow us to be precise about the type of structure considered when dealing with bimodules.

Let $A$, $B$ be (not necessarily) commutative rings and $P$ a $(B,A)$-bimodule. We say that $P$ is \emph{left dualizable} if there exists an $(A,B)$-bimodule $Q$ and bimodule homomorphisms
\[
 \eta: A \to Q \otimes_B P,\:\theta: P\otimes_A Q \to B
\]
such that the diagrams
\[
 \xymatrix@C+2pc{A \otimes_A Q \ar[r]^{\eta \otimes \id_Q} \ar[d]^{\simeq} & (Q\otimes_B P) \otimes_A Q \ar[d]^{\simeq}\\
           Q \otimes_B B & \ar[l]_{\id_Q \otimes \theta} Q \otimes_B (P \otimes_A Q)}
\]
and
\[
 \xymatrix@C+2pc{P \otimes_A A \ar[r]^{\id_P \otimes \eta} \ar[d]^{\simeq} & P \otimes_A (Q \otimes_B P) \ar[d]^{\simeq}\\
           B \otimes_B P & \ar[l]_{\theta \otimes \id_P} (P \otimes_A Q) \otimes_B P,}
\]
commute, with the vertical arrows being the associators and unitors. The map $\theta$ is called the \emph{evaluation} and the map $\eta$ the \emph{coevaluation}. Furthermore, the $(A,B)$-bimodule $Q$ is unique up to isomorphism and will be referred to as the \emph{left dual} of $P$. If there is no ambiguity we will write $\dual{P}=Q$ and $\dual{Q}=P$. Note that $P$ being left dualizable is equivalent to saying that the functor $-\otimes_B P$ from right $B$-modules to right $A$-modules is right adjoint to $-\otimes_A Q$; in particular, by tensor-hom adjunction we have a natural isomorphism
\[
 P\simeq \Hom_{(-,B)}(Q,B).
\]

We will now recall this classical result, which characterizes dualizable bimodules as those who are finite projective.

\begin{theorem}
\label{t:dual-bimod}
 Let $P$ be a $(B,A)$-bimodule. Then $P$ is left dualizable if and only if it is finitely generated projective as a left $B$-module. In particular, we have bijections
 \[
  \Hom_{(-,A)}(M,N\otimes_B P) \simeq \Hom_{(-,B)}(M \otimes_A \dual{P},N),
 \]
 which are natural in $M$ and $N$.
\end{theorem}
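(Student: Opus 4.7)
The statement is a classical fact about dualizable objects in the bicategory of bimodules, and my plan is to prove the two implications separately using the \emph{dual basis lemma}, then read off the adjunction isomorphism directly from the unit--counit data.

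For the ``if'' direction, suppose $P$ is finitely generated projective as a left $B$-module. Then there exist elements $p_1,\ldots,p_n \in P$ and left $B$-linear maps $f_1,\ldots,f_n : P \to B$ with $p = \sum_i f_i(p)\, p_i$ for all $p \in P$. I would take $Q := \Hom_{(B,-)}(P,B)$, which becomes an $(A,B)$-bimodule via the right $A$-action $(a\cdot f)(p) := f(pa)$ and the left $B$-action by postmultiplication. The evaluation is the obvious map $\theta: P \otimes_A Q \to B$, $p \otimes f \mapsto f(p)$, while the coevaluation $\eta : A \to Q \otimes_B P$ is determined by $\eta(1) := \sum_i f_i \otimes p_i$; one checks that this element is independent of the chosen dual basis, from which it follows that $\eta$ is a well-defined bimodule map. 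The two triangle identities then reduce on generators to the dual-basis identities for $P$ and for $Q$.

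For the converse, suppose $P$ is left dualizable with dual $Q$, evaluation $\theta$ and coevaluation $\eta$. Writing $\eta(1) = \sum_{i=1}^n q_i \otimes p_i$ (a finite sum, by the very definition of the tensor product), the second triangle identity unfolds, for each $p \in P$, to
\[
 p \;=\; \sum_{i=1}^n \theta(p \otimes q_i)\, p_i.
\]
Because $\theta$ is a $(B,B)$-bimodule map, each $f_i : P \to B$, $p \mapsto \theta(p \otimes q_i)$, is left $B$-linear, so $(p_i,f_i)$ is a dual basis for $P$ as a left $B$-module. Hence $P$ is finitely generated projective over $B$, and comparing with the construction above one sees that the dual is necessarily $\dual{P} \simeq Q$.

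For the natural bijection, given $\varphi \in \Hom_{(-,A)}(M, N \otimes_B P)$ I would form the composite
\[
 M \otimes_A \dual{P} \xrightarrow{\varphi \otimes \id} (N \otimes_B P) \otimes_A \dual{P} \;\simeq\; N \otimes_B (P \otimes_A \dual{P}) \xrightarrow{\id \otimes \theta} N \otimes_B B \simeq N,
\]
and for $\psi \in \Hom_{(-,B)}(M \otimes_A \dual{P}, N)$ the composite
\[
 M \simeq M \otimes_A A \xrightarrow{\id \otimes \eta} M \otimes_A (\dual{P} \otimes_B P) \;\simeq\; (M \otimes_A \dual{P}) \otimes_B P \xrightarrow{\psi \otimes \id} N \otimes_B P.
\]
The triangle identities are exactly what is needed to see that these two assignments are mutually inverse, and naturality in $M$ and $N$ is immediate from functoriality of the tensor product. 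The only real hazard throughout is keeping track of which side is acting on which tensor factor; this is bookkeeping rather than substance, and is precisely why the pedantic $\Hom_{(-,A)}$ versus $\Hom_{(B,-)}$ notation was introduced just before the statement.
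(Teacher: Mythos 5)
Your argument is correct in substance, but it is worth noting that the paper does not prove this theorem at all: it is stated as a classical fact (with the adjunction characterization deferred to the literature, e.g.\ Schommer-Pries), so there is no ``paper proof'' to compare against. What you give is the standard dual-basis argument, and it does establish the statement. Two small points should be tightened. First, the bimodule structure you put on $Q=\Hom_{(B,-)}(P,B)$ is labelled backwards: $(a\cdot f)(p):=f(pa)$ is a \emph{left} $A$-action and postmultiplication $(f\cdot b)(p):=f(p)b$ is a \emph{right} $B$-action --- which is exactly what you need for $Q$ to be an $(A,B)$-bimodule, so the formulas are right and only the words ``right''/``left'' are swapped (harmless here since the paper's application is to commutative rings, but not in the generality of the statement). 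Second, the assertion that $\eta$ is a bimodule map ``follows from'' independence of $\eta(1)=\sum_i f_i\otimes p_i$ under change of dual basis is not quite the right justification: what must be checked is the centrality $a\cdot\eta(1)=\eta(1)\cdot a$, and independence of the dual basis does not formally imply this (e.g.\ $(p_i,a f_i)$ is not again a dual basis). It does hold, either by the direct computation $\sum_i f_i\otimes p_i a=\sum_j af_j\otimes p_j$ using left $B$-linearity of the $f_j$ and $pa=\sum_i f_i(p)(p_ia)$, or by identifying $Q\otimes_B P$ with $\Hom_{(B,-)}(P,P)$ for $P$ finitely generated projective and noting that $\eta(1)$ corresponds to $\id_P$, which commutes with every right multiplication $r_a$. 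With those repairs the ``if'' direction, the converse via the second triangle identity, and the unit--counit construction of the natural bijection are all standard and complete.
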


\begin{remark}
 If a functor $G$ is left adjoint to $-\otimes_B P$, then $G$ is automatically of the form $-\otimes_A Q$ for some $(A,B)$-bimodule $Q$ (see for example \cite[Theorem 3.60]{schommer_pries}). Thus $P$ being left dualizable is equivalent to $-\otimes_B P$ being right adjoint.
\end{remark}

\begin{remark}
 If $\mathcal{C}$ is a monoidal category (for example, the category of modules over a commutative ring $R$), then a dualizable object $X$ of $\mathcal{C}$ is an adjoint to the morphism corresponding to $X$ in the delooping $2$-category $\mathbf{B}\mathcal{C}$. Similarly, a bimodule $P$ is left dualizable if the corresponding $1$-morphism in the bicategory of algebras, bimodules and intertwiners (as introduced in \cite{benabou}) is a right adjoint.
\end{remark}

For the purposes of the next section we need to consider the case where $P$ is a cofiltered limit of $(B,A)$-bimodules $P_i$, which are finitely generated projective as left $B$-modules. In general, $P$ will not be finite projective over $B$ itself, so there does not exist a (left) dual of $P$ in the sense discussed above. However, the functor $\varprojlim_i (-\otimes_B P_i)$ does have a left adjoint, which just follows from the fact that right adjoints commute with limits. We summarize this fact in the following corollary:

\begin{corollary}
\label{c:dual-hom-formula-inf}
 Let $P=\varprojlim_i P_i$ be a cofiltered limit of $(B,A)$-bimodules $P_i$ which are finitely generated projective over $B$. Then the left duals $\dual{P_i}$ form a filtered system. Furthermore, we have bijections
 \[
  \Hom_{(-,A)}(M,\varprojlim_i N\otimes_B P_i) \simeq \Hom_{(-,B)}(M \otimes_A \varinjlim_i \dual{P_i},N),
 \]
 which are natural in $M$ and $N$.
\end{corollary}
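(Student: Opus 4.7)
The plan is to deduce the corollary formally from \cref{t:dual-bimod} by passing to the (co)limit in the natural isomorphism it provides.

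First I would observe that the assignment $P \mapsto \dual{P}$ is a contravariant $2$-functor between the relevant categories of dualizable bimodules: any bimodule map $P_j \to P_i$ induces, via precomposition after applying the tensor-hom identification $P \simeq \Hom_{(-,B)}(\dual{P}, B)$, a map $\dual{P_i} \to \dual{P_j}$ in the opposite direction, and functoriality follows from the uniqueness up to isomorphism of duals together with the compatibility of evaluation and coevaluation. Hence a cofiltered system $(P_i)$ of left-dualizable $(B,A)$-bimodules yields a filtered system $(\dual{P_i})$ of $(A,B)$-bimodules, which establishes the first assertion.

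Next I would apply \cref{t:dual-bimod} to each index, getting natural bijections
\[
 \Hom_{(-,A)}(M, N \otimes_B P_i) \simeq \Hom_{(-,B)}(M \otimes_A \dual{P_i}, N),
\]
and take the limit over $i$ on both sides. On the left-hand side, $\Hom$ preserves limits in its second argument, yielding
\[
 \varprojlim_i \Hom_{(-,A)}(M, N \otimes_B P_i) \simeq \Hom_{(-,A)}\bigl(M, \varprojlim_i N \otimes_B P_i\bigr).
\]
On the right-hand side, $\Hom$ turns colimits in its first argument into limits, and the tensor product commutes with colimits, so
\[
 \varprojlim_i \Hom_{(-,B)}(M \otimes_A \dual{P_i}, N) \simeq \Hom_{(-,B)}\bigl(M \otimes_A \varinjlim_i \dual{P_i}, N\bigr).
\]
Composing these two identifications with the limit of the isomorphisms from \cref{t:dual-bimod} gives the desired bijection, and naturality in $M$ and $N$ is inherited term by term.

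There is no real obstacle here beyond bookkeeping: the content is essentially that right adjoints commute with limits, specialised to the adjunctions $(- \otimes_A \dual{P_i}) \dashv (- \otimes_B P_i)$ from \cref{t:dual-bimod}. The only point needing a bit of care is verifying that the functoriality of the dualisation $P_i \mapsto \dual{P_i}$ is compatible with the chosen adjunction isomorphism, so that the transition maps of the cofiltered system on one side correspond under \cref{t:dual-bimod} precisely to those of the filtered system on the other; this is a direct consequence of the naturality statement in \cref{t:dual-bimod}.
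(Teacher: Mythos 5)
Your argument is correct and is essentially the paper's own: the corollary is deduced by applying \cref{t:dual-bimod} indexwise and using that $\Hom$ takes the limit $\varprojlim_i N\otimes_B P_i$ out of its second argument and the colimit $\varinjlim_i \dual{P_i}$ (which commutes with $M\otimes_A-$) out of its first, i.e.\ that right adjoints commute with limits. Your additional care about the functoriality of $P_i\mapsto\dual{P_i}$ and its compatibility with the adjunction isomorphisms is exactly the bookkeeping the paper leaves implicit.
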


\section{Higher derivations of modules and the Hasse-Schmidt module}

\label{s:hs-modules}

Higher derivations of modules were first introduced in \cite{rib_higher_der}, where they were defined with respect to a given higher derivation of rings. The existence of a universal object, which we call Hasse-Schmidt module, parametrizing such higher derivations was already established there; however, it also appeared implicitly in more detail in \cite{def_doc_differentials}. In this section our aim is to provide a top-down view of the construction of the Hasse-Schmidt module and how it relates to the Hasse-Schmidt algebra by means of \cref{t:hs-sym-alg}, which is the main result of this section.

Throughout this section we will write $A_n:=\UHS_k^n(A)$ for the Hasse-Schmidt algebra of a $k$-algebra $A$. All rings considered here are assumed to be commutative; in particular, we do not have to distinguish between left and right actions.

Let now $A$ and $C$ be $k$-algebras and $D=(D_i)_{i=0}^n:A\to C$ be a higher derivation of length $n$ over $k$. A \emph{higher derivation over $D$},  $\D=(\D_i)_{i=0}^n: M\to N$, where $M\in\Mod_A$, $N\in\Mod_C$, is a collection of $k$-linear maps $\D_i:M\to N$ satisfying
\[
 \D_i(a\cdot m)=\sum_{k+l=i} D_k(a)\cdot \D_l(m),\:a\in A, m\in M.
\]
The set of all such maps is denoted by $\HS^n_D(M,N)$. We have isomorphisms
\[
 \HS^n_D(M,N)\simeq \Hom_A(M,N[[t]]_n),
\]
which are natural in $M$ and $N$. Note that the $A$-module structure on $N[[t]]_n$ above comes from scalar restriction through $\g_D:A\to C[[t]]_n$, with $D_i = \varphi_D \circ d^n_{A,i}$. If we consider $N$ as an $A_n$-module via restriction by $\varphi_D: A_n \to C$, then we see that
\[
 \HS^n_D(M,N)\simeq \HS^n_{d^n_A}(M,N).
\]
Let us first treat the case $n\in \N$. We consider $A_n[[t]]_n$ as a $(A_n,A)$-bimodule, where the left action is just the usual multiplication and the right action is induced by $\g_A$. Then we have an isomorphism of right $A$-modules
\[
 N[[t]]_n \simeq N \otimes_{A_n} A_n[[t]]_n.
\]
Note that $A_n[[t]]_n$ is free of rank $n+1$ over $A_n$. By \cref{t:dual-bimod} we get
\[
 \HS^n_{d^n_A}(M,N)\simeq \Hom_A(M,N\otimes_{A_n} A_n[[t]]_n)\simeq \Hom_{A_n}(M\otimes_A \dual{(A_n[[t]]_n)},N),
\]
where $\dual{(A_n[[t]]_n)}\simeq\Hom_{(-,A_n)}(A_n[[t]]_n,A_n)$ is the left dual of $A_n[[t]]_n$. Using the fact that extension of scalars is left adjoint to restriction, we obtain
\[
 \HS^n_D(M,N)\simeq \Hom_C(M\otimes_A \dual{(A_n[[t]]_n)}\otimes_{A_n} C,N)
\]

Now, in the case $n=\infty$, we have that $A_\infty[[t]]=\varprojlim_n A_\infty[[t]]_n$ is the limit of a projective system of $(A_\infty,A)$-bimodules which are finite free over $A_\infty$. Arguing as before and applying \cref{c:dual-hom-formula-inf} yields
\[
 \HS^\infty_D(M,N)\simeq \Hom_{C}(M\otimes_A \varinjlim_n \dual{(A_\infty[[t]]_n)} \otimes_{A_\infty} C,N).
\]
Thus we have proven the following result:

\begin{theorem}[{\cite[\S 4]{rib_higher_der}}]
 \label{t:universal-hs-module}
 Let $A$ and $C$ be $k$-algebras and $D:A\to C$ be a higher derivation of order $n\in \N \cup \{\infty\}$. Then, for any $A$-module $M$, the functor
 \[
  \HS^n_D(M,-): \Mod_{C} \to \Set
 \]
 is representable by a module $\UHS_{A/k}^n(M)\otimes_{A_n} C$, where we call the $A_n$-module \\ $\UHS_{A/k}^n(M)$ the \emph{$n$-th Hasse-Schmidt module} of $M$. Moreover, we have
 \[
  \UHS_{A/k}^n(M)\simeq M\otimes_A \dual{(A_n[[t]]_n)}
 \]
 for $n\in \N$ and
 \[
  \UHS_{A/k}^\infty(M)\simeq M\otimes_A \varinjlim_n \dual{(A_\infty[[t]]_n)},
 \]
 where $\dual{(A_\infty[[t]]_n)}$ is the (left) dual of the finite free $A_\infty$-module $A_\infty[[t]]_n$.
\end{theorem}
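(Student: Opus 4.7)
The plan is to follow the categorical chain of natural isomorphisms that is essentially set up in the preceding discussion, reducing the representability question to the dualizability machinery of \cref{s:dual-bimodules}. First I would establish the module analogue of \cref{e:nat-bij-hs}: given $\D = (\D_i)_{i=0}^n \in \HS^n_D(M,N)$, assemble the components into a single map $\g_\D: M \to N[[t]]_n$ defined by $m \mapsto \sum_i \D_i(m)\, t^i$. The higher Leibniz rule over $D$ translates precisely into $\g_\D$ being $A$-linear, where $N[[t]]_n$ carries the $A$-module structure obtained by restriction along $\g_D: A \to C[[t]]_n$. This produces a natural bijection $\HS^n_D(M,N) \simeq \Hom_A(M, N[[t]]_n)$, which moreover factors through the $A_n$-module structure on $N$ induced by $\varphi_D: A_n \to C$, so that $\HS^n_D(M,N) \simeq \HS^n_{d^n_A}(M,N)$.

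For finite $n$, the next step is to recognize the bimodule structure hidden on the right side. I would view $A_n[[t]]_n$ as an $(A_n, A)$-bimodule with left action given by ordinary multiplication and right action obtained by restriction along $\g_A: A \to A_n[[t]]_n$. The identification $N[[t]]_n \simeq N \otimes_{A_n} A_n[[t]]_n$ as right $A$-modules then follows by unwinding definitions, using that $A_n[[t]]_n$ is free of rank $n+1$ on the basis $1, t, \ldots, t^n$. Since this bimodule is finite free over $A_n$, \cref{t:dual-bimod} applies and yields
\[
 \Hom_A(M, N \otimes_{A_n} A_n[[t]]_n) \simeq \Hom_{A_n}\bigl(M \otimes_A \dual{(A_n[[t]]_n)},\, N\bigr).
\]
Finally, the extension-of-scalars / restriction adjunction along $\varphi_D: A_n \to C$ converts this into $\Hom_C\bigl((M \otimes_A \dual{(A_n[[t]]_n)}) \otimes_{A_n} C,\, N\bigr)$, which identifies the representing object as $\UHS^n_{A/k}(M) \otimes_{A_n} C$ with $\UHS^n_{A/k}(M) := M \otimes_A \dual{(A_n[[t]]_n)}$.

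For $n = \infty$, the obstacle is that $A_\infty[[t]]$ is no longer finite free over $A_\infty$, so \cref{t:dual-bimod} does not apply on the nose. The resolution is to note that $A_\infty[[t]] = \varprojlim_n A_\infty[[t]]_n$ as $(A_\infty, A)$-bimodules, with each $A_\infty[[t]]_n$ finite free over $A_\infty$. \cref{c:dual-hom-formula-inf} then directly produces the desired natural bijection, with the left duals assembling into a filtered colimit, giving $\UHS^\infty_{A/k}(M) \simeq M \otimes_A \varinjlim_n \dual{(A_\infty[[t]]_n)}$.

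I do not expect any substantive obstacle beyond careful bookkeeping; the entire argument is a chain of natural isomorphisms. The one place to be attentive is the identification $N[[t]]_n \simeq N \otimes_{A_n} A_n[[t]]_n$ as right $A$-modules, since it is the choice of $\g_A$ for the right action that encodes the universal higher derivation into the tensor factorization and makes the resulting representing object $A_n$-linear. The passage from $n$ finite to $n=\infty$ is by design handled entirely by \cref{c:dual-hom-formula-inf}, which was proved precisely for this purpose.
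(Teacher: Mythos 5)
Your proposal is correct and follows essentially the same route as the paper: the bijection $\HS^n_D(M,N)\simeq\Hom_A(M,N[[t]]_n)$, the reduction to $\HS^n_{d^n_A}$ via $\varphi_D$, the $(A_n,A)$-bimodule structure on $A_n[[t]]_n$ with the identification $N[[t]]_n\simeq N\otimes_{A_n}A_n[[t]]_n$, \cref{t:dual-bimod} plus the extension/restriction adjunction for finite $n$, and \cref{c:dual-hom-formula-inf} for $n=\infty$. No gaps.
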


\begin{remark}
 \label{r:universal-hs-module-der}
 The module $\UHS_{A/k}^n(M)$ comes attached with a universal higher derivation $\D_M: M\to \UHS_{A/k}^n(M)$ over $d_A^n: A\to A_n$. We want to give an explicit description of $\D_M$ in the case where $M=A$, i.e. $\UHS_{A/k}^n(M)=\dual{(A_n[[t]]_n)}$. To that avail, if $n\in\N$, observe that the $A$-module map $\alpha_D: A\to \UHS_{A/k}^n(M)[[t]]_n$ corresponding to $\D_M$ is just the coevaluation. Thus, if we take the standard basis $t^i$, $i=0,\ldots,n$ for $A_n[[t]]_n$ over $A_n$ and let $t^{[i]}$, $i=0,\ldots,n$, denote the dual basis, the map $\alpha_D$ is given by
 \begin{equation}
  \label{e:universal-hs-module-der-2}
  1_A \mapsto \sum_{i=0}^n t^{[i]} t^i.
 \end{equation}
 Note that if $n=\infty$ then the images of $t^{[i]}$, $i\in\N$, in $\varinjlim_n \dual{(A_\infty[[t]]_n)}$ form a basis over $A_\infty$. Hence the universal higher derivation $\D_A: A\to \UHS_{A/k}^n(M)$ is given by
 \begin{equation}
  \label{e:universal-hs-module-der}
  (\D_M)_i(1_A)=t^{[i]}.
 \end{equation}
\end{remark}

\begin{remark}
 \label{r:dual-isomorphic}
 For $n\in \N$, since $A_n[[t]]_n$ is finite free over $A_n$, we have that
 \[
  \UHS_{A/k}^n(M) \simeq M \otimes_A A_n[[t]]_n
 \]
 as $A_n$-modules. Indeed, choosing the same basis as in \cref{r:universal-hs-module-der}, this isomorphism is given identifying $A_n[[t]]_n$ with its dual via $t^i\mapsto t^{[i]}$.
\end{remark}

\begin{remark}
 \label{r:hs-module-proj-free}
 As $A_n[[t]]_n$ is free for $n\in\N\cup\{\infty\}$, if $M$ is projective respectively free then so is $\UHS_{A/k}^n(M)$.
\end{remark}

\begin{remark}
 \label{r:universal-hs-module-extension}
 Alternatively one could consider triples $(C,D,N)$, with $C$ a $k$-algebra, $D\in\HS_k^n(A,C)$ and $N$ a $C$-modules, a suitable notion of morphism between such triples and the functor which associates to each such triple the set $\HS^n_D(M,N)$. Then the argument given before shows that this functor is represented by the triple $(A_n,d^n_A,\UHS_{A/k}^n(M))$.
\end{remark}

\begin{remark}
 \label{r:hs-mod-direct-limit}	
 We have natural isomorphisms $\UHS^\infty_{A/k}(M)\simeq \varinjlim\limits_n \UHS^n_{A/k}(M)$. Indeed, this follows just as in \cref{r:hs-alg-direct-limit} from
 \[
  \Hom_A(M,N[[t]])\simeq \Hom_A(M,\varprojlim_n N[[t]]_n)\simeq \varprojlim_n \Hom_A(M,N[[t]]_n),
 \]
 where $N$ is an $A_n$-module.
\end{remark}

We will refer to \cite{rib_higher_der} for a basic introduction to the theory of higher derivations of modules. As one fact not included there, let us mention that the Hasse-Schmidt module behaves well under base change.

\begin{lemma}
 \label{l:hs-module-base-change}
 Let $A\to A'$ be a $k$-algebra map and $M$ an $A$-module. Then we have
 \[
  \UHS_{A'/k}^n(M\otimes_A A')\simeq \UHS_{A/k}^n(M)\otimes_{A_n} A'_n.
 \]	
\end{lemma}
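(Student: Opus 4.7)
The plan is to apply Yoneda on the category of $A'_n$-modules. By \cref{t:universal-hs-module}, for any $A'_n$-module $N$ one has
\[
 \Hom_{A'_n}(\UHS^n_{A'/k}(M\otimes_A A'),N) \simeq \HS^n_{d^n_{A'}}(M\otimes_A A',N),
\]
and, viewing $A'_n$ as an $A_n$-algebra through $f_n:A_n\to A'_n$ (\cref{r:hs-alg-funct}) and applying the same theorem to the composite higher derivation $D:=d^n_{A'}\circ f : A \to A'_n$, one has
\[
 \Hom_{A'_n}(\UHS^n_{A/k}(M)\otimes_{A_n} A'_n,N) \simeq \HS^n_{D}(M,N).
\]
Hence the lemma reduces to producing a bijection $\HS^n_{d^n_{A'}}(M\otimes_A A',N) \simeq \HS^n_{D}(M,N)$ natural in $N$.

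In one direction, I would restrict a higher derivation $\D:M\otimes_A A'\to N$ along $m\mapsto m\otimes 1$; thanks to the functoriality identity $d^n_{A',i}\circ f = f_n \circ d^n_{A,i}$, the resulting $\delta_i(m):=\D_i(m\otimes 1)$ is a higher derivation over $D$. In the other direction, given $\delta$, the only possible extension is dictated by the higher Leibniz rules, namely
\[
 \D_i(m\otimes a') := \sum_{k+l=i} d^n_{A',k}(a')\cdot \delta_l(m).
\]
A short unwinding shows that these two assignments are mutually inverse and natural in $N$, so that Yoneda yields the claimed isomorphism.

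The only nontrivial point, and the main obstacle, is verifying that this extension is well-defined as a map out of $M\otimes_A A'$, i.e. $A$-balanced, and that it satisfies the higher Leibniz rule over $d^n_{A'}$. Both verifications reduce to the same double-sum manipulation: one expands $\D_i(am\otimes a') = \D_i(m\otimes f(a)a')$ using the higher Leibniz rule for $d^n_{A'}$ together with the compatibility $d^n_{A',k}\circ f = f_n \circ d^n_{A,k}$, and regroups to match $\sum d^n_{A',k}(a')\cdot \delta_l(am) = \sum d^n_{A',k}(a')\cdot\sum d^n_{A',p}(f(a))\delta_q(m)$; commutativity of the ambient rings makes the two resulting triple sums agree. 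The case $n=\infty$ is handled by the same argument, since \cref{t:universal-hs-module} furnishes the representability statement in that range as well; alternatively, one can pass to direct limits via \cref{r:hs-mod-direct-limit} using that tensor products commute with filtered colimits and that $A'_\infty \simeq \varinjlim_n A'_n$.
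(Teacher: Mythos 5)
Your argument is correct, but it takes a genuinely different route from the paper. The paper's proof is a one-liner: it invokes the explicit description $\UHS^n_{A/k}(M)\simeq M\otimes_A \dual{(A_n[[t]]_n)}$ from \cref{t:universal-hs-module} and uses that the dual of the finite free bimodule $A_n[[t]]_n$ commutes with the base change $A_n\to A'_n$ (together with $f_n\circ\g_A=\g_{A'}\circ f$ to match the $A$- and $A'$-actions), so that both sides become $M\otimes_A\dual{(A'_n[[t]]_n)}$; the $n=\infty$ case is absorbed by the colimit-of-duals description. You instead work purely with the universal property: you identify both sides as representing objects for $\HS^n_{d^n_{A'}}(M\otimes_A A',-)$ and $\HS^n_D(M,-)$ with $D=d^n_{A'}\circ f$ (correctly noting that $\varphi_D=f_n$, so the $A_n$-algebra structure on $A'_n$ is the right one), and then exhibit a natural bijection between these two sets of higher derivations by restriction along $m\mapsto m\otimes 1$ and the forced Leibniz extension. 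Your well-definedness and Leibniz checks are exactly the triple-sum regroupings you describe, and the two maps are indeed mutually inverse (using $d^n_{A',k}(1)=0$ for $k>0$). What each approach buys: the paper's is immediate once the dual-bimodule formula is in hand, whereas yours is longer but self-contained, applies verbatim for $n=\infty$ since \cref{t:universal-hs-module} covers that case, and produces as a byproduct the explicit adjunction-style bijection between higher derivations of $M$ over $D$ and of $M\otimes_A A'$ over $d^n_{A'}$, which is not stated elsewhere in the paper.
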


\begin{proof}
 Follows immediately from the description in \cref{t:universal-hs-module}.
\end{proof}	

The following theorem is the main result of this section and relates the Hasse-Schmidt module to the Hasse-Schmidt algebra by means of the symmetric algebra.

\begin{theorem}
 \label{t:hs-sym-alg}	
 For each $n\in\N\cup\{\infty\}$ there are isomorphisms of $\N$-graded $A_n$-algebras
 \[
  \Sym_{A_n}(\UHS^n_{A/k}(M))\simeq \UHS^n_k(\Sym_A(M)),
 \]
 which are natural in $M$. Note that on the left side we are considering the natural grading on the symmetric algebra, while on the right hand side we are taking the induced grading of $\UHS^n_k(\Sym_A(M))$.
\end{theorem}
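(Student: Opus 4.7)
My plan is to show that both sides represent the same functor on commutative $A_n$-algebras, and then to upgrade the resulting isomorphism to the graded setting. Fix a commutative $A_n$-algebra $B$, and write $D \in \HS^n_k(A,B)$ for the higher derivation corresponding to the structural map $A_n \to B$ under \cref{e:nat-bij-hs-alg-res}. The universal property of the symmetric algebra combined with \cref{t:universal-hs-module} yields
\[
 \Hom_{\Alg_{A_n}}\bigl(\Sym_{A_n}(\UHS^n_{A/k}(M)), B\bigr) \simeq \Hom_{A_n}(\UHS^n_{A/k}(M), B) \simeq \HS^n_D(M, B).
\]

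For the right-hand side, \cref{e:nat-bij-hs-alg-res} together with \cref{e:nat-bij-hs} identifies $\Hom_{\Alg_{A_n}}(\UHS^n_k(\Sym_A(M)), B)$ with the set of $A$-algebra maps $\Sym_A(M) \to B[[t]]_n$, where $B[[t]]_n$ carries the $A$-algebra structure coming from $\g_D$. The universal property of $\Sym_A$ in commutative $A$-algebras then turns this set into the set of $A$-linear maps $M \to B[[t]]_n$, which is by definition $\HS^n_D(M,B)$. Composing the two chains of bijections and applying Yoneda produces the desired isomorphism of $A_n$-algebras, naturally in $M$.

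To upgrade this to a graded isomorphism for $n \in \N$, I would re-run the argument in the category of $\N$-graded $A_n$-algebras (with $A_n$ trivially graded), viewing $\Sym_A(M)$ as graded with $A$ in degree $0$ and $M$ in degree $1$. For a graded test $A_n$-algebra $B = \bigoplus_i B_i$, graded $A_n$-algebra maps out of $\Sym_{A_n}(\UHS^n_{A/k}(M))$ correspond to $A_n$-linear maps $\UHS^n_{A/k}(M) \to B_1$, i.e.\ to elements of $\HS^n_D(M, B_1)$. On the other side, \cref{t:funct-gr-hs-alg} and \cref{r:funct-gr-hs-alg-res} identify graded $A_n$-algebra maps $\UHS^n_k(\Sym_A(M)) \to B$ with graded higher derivations $\Sym_A(M) \to B$ extending $D$; such a derivation necessarily sends $M$ into $B_1$, again producing an element of $\HS^n_D(M, B_1)$. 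For $n = \infty$ no direct graded universal property is available, so I would pass to the filtered colimit via \cref{r:hs-alg-direct-limit} and \cref{r:hs-mod-direct-limit}, using that $\Sym$ commutes with filtered colimits (and with base change along $A_n \to A_\infty$).

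The main obstacle will be the symmetric-algebra extension step, namely the bijection between higher derivations $E \in \HS^n_k(\Sym_A(M), B)$ with $E|_A = D$ and their restrictions $E|_M \in \HS^n_D(M, B)$. Via the power-series reformulation \cref{e:nat-bij-hs} this reduces to the standard universal property of $\Sym_A(M)$ in commutative $A$-algebras, but one must check carefully that the bijection is natural in $B$ and $M$, and that under it the degree-$1$ generators $m^{(i)} \in \UHS^n_k(\Sym_A(M))$ correspond to the components $(\D_M)_i(m) \in \UHS^n_{A/k}(M)$ of the universal higher derivation $\D_M : M \to \UHS^n_{A/k}(M)$—this is precisely what makes the isomorphism graded.
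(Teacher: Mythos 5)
Your proposal is correct and follows essentially the same route as the paper: both sides are compared through the common functor $B \mapsto \HS^n_D(M,B_1) \simeq \Hom_{\Mod_A}(M,B_1[[t]]_n)$ on graded $A_n$-algebras $B$, using the graded universal properties of $\Sym$ and of $\UHS^n_k(-)$ (\cref{t:funct-gr-hs-alg}), with the case $n=\infty$ handled by passing to filtered colimits exactly as in \cref{r:hs-alg-direct-limit} and \cref{r:hs-mod-direct-limit}. The only difference is that you first run the ungraded representability argument separately, which the paper skips since it is subsumed by the graded one.
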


\begin{remark}
 By considering $\UHS^n_{A/k}(M)$ as a graded $A_n$-module similar to \cref{r:nat-gr-hs-alg} the isomorphism in \cref{t:hs-sym-alg} extends to one of $\N$-bigraded algebras. 
\end{remark}

\begin{proof}
  Consider first the case $n\in \N$ and let $B=\bigoplus_{i\geq0} B_i$ be an $\N$-graded $A_n$-algebra, where we consider $A_n$ with the trivial grading. Then each $B_i$ is in particular an $A_n$-module. Recall that $B[[t]]_n$ is a free $B$-module of rank $n+1$ and thus has a natural $\N$-grading given by $(B[[t]]_n)_i:=B_i[[t]]_n$. We obtain natural bijections
  \begin{align*}
   \Hom_{\Alg_{A_n}^{\gr}}(\Sym_{A_n}(\UHS^n_{A/k}(M)),B) & \simeq \Hom_{\Mod_{A_n}}(\UHS^n_{A/k}(M),B_1) \simeq \\
   \simeq \Hom_{\Mod_A}(M,B_1[[t]]_n) & \simeq \Hom_{\Alg_A^{\gr}}(\Sym_A(M),B[[t]]_n).
  \end{align*}
  Here again we consider $A$ with the trivial grading. If $\rho: A_n \to B$ is the map defining the $A_n$-algebra structure on $B$, then the $A$-algebra (resp. $A$-module) structure on $B[[t]]_n$ (resp. $B_1[[t]]_n$) is given by $\tilde{\rho}\circ d_A^n$, where $\tilde{\rho}: A_n[[t]]\to B[[t]]$ is obtained from $\rho$ by $t$-linear extension.
  We claim that 
  \[
   \Hom_{\Alg_A^{\gr}}(\Sym_A(M),B[[t]]_n) \simeq \Hom_{\Alg_{A_n}^{\gr}}(\UHS^n_k(\Sym_A(M)),B).
  \]
  Indeed, an element $\alpha$ of the left-hand side is given by a triangle 
  \[
   \xymatrix{\Sym_A(M) \ar[r]^-\alpha & B[[t]]_n \\
             A \ar[u] \ar[ur]_{\tilde{\rho}\circ d_A^n} & }
  \]
  where $\alpha=(\alpha_i)_{i\in\N}$ is graded of degree $0$. By \cref{t:funct-gr-hs-alg} we obtain a triangle of $k$-algebra maps
  \[
   \xymatrix{\UHS^n_k(\Sym_A(M)) \ar[r]^-{\alpha^*} & B \\
             \UHS^n_k(A) \ar[u] \ar[ur]_{\rho} & },
  \]
  where $\alpha^*$ and $\rho$ is graded. Conversely, every such triangle is obtained by one of the form above. This proves the claim for $n\in \N$.

  We are thus left with the case $n=\infty$. Taking colimits of the isomorphism for finite $n$ we obtain
  \[
  \varinjlim_n \UHS^n_k(\Sym_A(M)) \simeq \varinjlim_n \Sym_{A_n}(\UHS^n_{A/k}(M)) \simeq \Sym_{A_\infty}(\varinjlim_n \UHS^n_{A/k}(M)).
  \]
  As we have both $\UHS^\infty_k(B)=\varinjlim_n \UHS^n_k(B)$ and $\UHS^\infty_{A/k}(M)=\varinjlim_n \UHS^n_{A/k}(M)$ the result follows.
\end{proof}

\begin{remark}
 \label{r:dualizing-module}
 Applying \cref{t:hs-sym-alg} in the case $M=A$ and $n\in \N$ yields the following:
 \begin{equation}
 \label{e:desc-universal-hs-module}
  \UHS_{A/k}^n(A)\simeq \dual{(A_n[[t]]_n)} \simeq [\UHS_k^n(\Sym_A(A))]_1.
 \end{equation}
 To make this isomorphism explicit, let $e:=1_A$. Then 
 \[
  \UHS_k^n(\Sym_A(A))\simeq A_n[e^{(i)} \mid i=0,\ldots,n].
 \]
 Thus the $A_n$-submodule of elements of degree $1$ is generated by $e^{(i)}$, $i=0,\ldots,n$. Using the same basis as in \cref{r:universal-hs-module-der} we see that the isomorphism in \cref{e:desc-universal-hs-module} is given by
 \[
  t^{[i]}\mapsto e^{(i)}.
 \]
 Now, since the $A$-action on $\UHS_{A/k}^n(A)\simeq \dual{(A_n[[t]]_n)}$ is given by precomposition, we obtain an induced $A$-action on $[\UHS_k^n(\Sym_A(A))]_1$ given by
 \[
  a\cdot e^{(i)}=\sum_{j=0}^i a^{(i-j)} e^{(j)}.
 \]
 Compare this with the construction of $P_n$ given in \cite[Section 4]{def_doc_differentials}. It is not clear how to obtain this $A$-action naturally by just considering $\UHS_k^n(\Sym_A(A))$.
\end{remark}

Using \cref{t:hs-sym-alg} we will recover the formula in \cite[Theorem 5.3]{def_doc_differentials} as a direct consequence of the fact that the Hasse-Schmidt algebra functors commute (see \cref{l:hs-alg-bigrad}).

\begin{theorem}
 \label{c:hs-cotangent}	
 For all $n\in\N \cup\{\infty\}$ there is a natural isomorphism
 \[
  \Om_{A_n/k}\simeq \UHS^n_{A/k}(\Om_{A/k})\simeq \Om_{A/k}\otimes_A \UHS_{A/k}^n(A).
 \]	
 of $A_n$-modules. Additionally, if $n\in \N$, we have
 \[
  \Om_{A_n/k}\simeq \Om_{A/k}\otimes_A A_n[[t]]_n.
 \]
\end{theorem}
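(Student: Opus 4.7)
The strategy is to reduce the computation to the first-order case $n=1$ and then invoke the commutativity of the Hasse-Schmidt functors. A direct comparison of universal properties shows that for any $k$-algebra $B$ there is a natural isomorphism of $\N$-graded $B$-algebras $\UHS^1_k(B)\simeq \Sym_B(\Om_{B/k})$, with the structural grading on the left matched to the symmetric-power grading on the right: both represent the functor sending a $k$-algebra $C$ to the set of pairs $(f,D)$ consisting of a $k$-algebra map $f\colon B\to C$ and a $k$-derivation $D\colon B\to C$ over $f$, and under this identification $b^{(1)}\leftrightarrow db$. Applying this to $B=A_n$ gives
\[
 \Sym_{A_n}(\Om_{A_n/k})\simeq \UHS^1_k(A_n),
\]
so it remains to compute the right-hand side by a different route.

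For this I would apply \cref{l:hs-alg-bigrad} with outer index $1$ and inner index $n$ to obtain a bigraded isomorphism $\UHS^1_k(\UHS^n_k(A))\simeq \UHS^n_k(\UHS^1_k(A))^*$; substituting $\UHS^1_k(A)\simeq \Sym_A(\Om_{A/k})$ and invoking the bigraded enhancement of \cref{t:hs-sym-alg} with $M=\Om_{A/k}$ yields
\[
 \UHS^n_k(\UHS^1_k(A))\simeq \UHS^n_k(\Sym_A(\Om_{A/k}))\simeq \Sym_{A_n}(\UHS^n_{A/k}(\Om_{A/k})).
\]
Combining these identifications realizes $\Sym_{A_n}(\Om_{A_n/k})$ and $\Sym_{A_n}(\UHS^n_{A/k}(\Om_{A/k}))$ as isomorphic bigraded $A_n$-algebras, and extracting the degree-$1$ piece in the symmetric-power direction produces the first asserted isomorphism $\Om_{A_n/k}\simeq \UHS^n_{A/k}(\Om_{A/k})$. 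The second isomorphism $\UHS^n_{A/k}(\Om_{A/k}) \simeq \Om_{A/k}\otimes_A \UHS^n_{A/k}(A)$ is immediate from the explicit formula in \cref{t:universal-hs-module} (namely $\UHS^n_{A/k}(M)\simeq M\otimes_A \dual{(A_n[[t]]_n)}$, specialized at $M=A$ to identify $\UHS^n_{A/k}(A)\simeq \dual{(A_n[[t]]_n)}$), and the final displayed formula for $n\in\N$ follows from \cref{r:dual-isomorphic} applied with $M=\Om_{A/k}$. The case $n=\infty$ is handled by passing to the colimit, exactly as at the end of the proof of \cref{t:hs-sym-alg}, using \cref{r:hs-alg-direct-limit,r:hs-mod-direct-limit}.

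The main obstacle is the bookkeeping of the two $\N$-gradings under the swap $*$ appearing in \cref{l:hs-alg-bigrad}: one must verify that the symmetric-power grading of $\Sym_{A_n}(\Om_{A_n/k})\simeq \UHS^1_k(A_n)$, which corresponds to the structural grading of the outer $\UHS^1_k(-)$, is carried by the swap to the structural grading of the inner $\UHS^1_k(A)$, and hence under $\UHS^1_k(A)\simeq \Sym_A(\Om_{A/k})$ to the symmetric-power grading of $\Sym_A(\Om_{A/k})$. This is precisely the direction in which $\UHS^n_{A/k}(\Om_{A/k})$ appears as the degree-$1$ component of $\Sym_{A_n}(\UHS^n_{A/k}(\Om_{A/k}))$ via \cref{t:hs-sym-alg}, so once this matching is in place the final degree-$1$ extraction is forced and the proof is complete.
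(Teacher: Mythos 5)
Your proposal is correct and follows essentially the same route as the paper: identify $\Sym_B(\Om_{B/k})\simeq \UHS^1_k(B)$, apply the commutativity of the Hasse--Schmidt functors (\cref{l:hs-alg-bigrad}) together with \cref{t:hs-sym-alg} for $M=\Om_{A/k}$, and extract the degree-$1$ component, with the last display coming from \cref{r:dual-isomorphic}. Your explicit attention to how the two gradings are exchanged under the swap $*$ is a point the paper's proof leaves implicit, but it is the same argument.
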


\begin{proof}
 Note that $\Sym_B(\Om_{B/k})\simeq \UHS^1_k(B)$ for any $k$-algebra $B$. Thus \cref{l:hs-alg-bigrad} implies that we have isomorphisms of graded algebras
 \[
  \UHS^n_k(\Sym_A(\Om_{A/k}))\simeq \UHS^n_k(\UHS^1_k(A))\simeq \UHS^1_k(\UHS^n_k(A))\simeq \Sym_{A_n}(\Om_{A_n/k}).
 \]
 Thus the statement follows from \cref{t:hs-sym-alg} for $M=\Om_{A/k}$ and taking the homogeneous part of degree $1$ on each side. The second assertion just follows from \cref{r:dual-isomorphic}.
\end{proof}

\begin{remark}
\label{r:der-as-higher-der}
 A similar argument allows us to consider (usual) $k$-derivations $d:A\to M$ with $M$ an $A$-module as higher derivations in the above sense. To that end, note that
 \[
  \Der_k(A,M)\simeq \Hom_A(\Om_{A/k},M)\simeq \Hom_{\Alg_A,\gr}(\Sym_A(\Om_{A/k}),\Sym_A(M)).
 \]
 Since $\Sym_A(\Om_{A/k})\simeq \UHS^1_k(A)$ as graded $k$-algebras, by \cref{r:funct-gr-hs-alg-res} we obtain
 \[
  \Der_k(A,M)\simeq \Hom^\circ_{\Alg_{k,\gr}}(A,\Sym_A(M)[[t]]_1).
 \]
 Since every such map factors as $A\to \UHS^1_k(A)[[t]]_1\to \Sym_A(M)[[t]]_1$ we have
 \[
  \Hom^\circ_{\Alg_{k,\gr}}(A,\Sym_A(M)[[t]]_1)\hookrightarrow \Hom_{\Mod_A}(A,\Sym_A(M)[[t]]_1),
 \]
 where the $A$-action on the $\Sym_A(M)[[t]]_1$ is induced by $d^1_A$. Thus we obtain our claim that
 \[
  \Der_k(A,M)\hookrightarrow \HS^1_{d^1_A}(A,\Sym_A(M)).
 \]
\end{remark}

\begin{remark}
\label{r:HS-der-mod-as-HS-der-sym-alg} 
In a similar vein to \cref{t:hs-sym-alg}, 
higher derivations of modules can be seen as graded higher derivations of symmetric algebras. Namely, if $A$ and $C$ are $k$-algebras, $M\in\Mod_A$, $N\in\Mod_C$, $D\in\HS^n_k(A,C)$ and $n\in \N$, we have:
\begin{gather*}
 \HS^n_D(M,N)  \simeq \Hom_A(M,N[[t]]_n) \simeq
 \Hom_{\Alg_A,\gr}(\Sym_A(M),\Sym_{C[[t]]_n}(N[[t]]_n))\\
 \simeq \Hom_{\Alg_A,\gr}(\Sym_A(M),\Sym_C(N)[[t]]_n) \subset \Hom_{\Alg_k,\gr}(\Sym_A(M),\Sym_C(N)[[t]]_n) \\
 \simeq \HS^n_{k,\gr}(\Sym_A(M),\Sym_C(N)), 
\end{gather*}
and so any $\Delta \in \HS^n_D(M,N)$ gives rise to a well-defined graded higher derivation $\mathbf{\Delta} \in \HS^n_{k,\gr}(\Sym_A(M),\Sym_C(N))$. Notice that for the degree $0$ part we have $\mathbf{\Delta}^0=D$. Actually, the above procedure gives natural bijections
\[
\{ (D,\Delta)\ |\ D\in \HS^n_k(A,C), \Delta\in \HS^n_D(M,N)\} \simeq \HS^n_{k,\gr}(\Sym_A(M),\Sym_C(N)).
\]
Since these bijections are compatible with truncations, they also hold for $n=\infty$.
\end{remark}

\section{Universal vector bundles on jet spaces}
\label{s:univ-vec-bundles}

Let $X$ be a scheme of finite type over a field $k$. For $n\in \N$ the \emph{$n$-th jet space} $X_n$ of $X$ is the $k$-scheme representing the functor 
\[
 Y\mapsto \Hom_k(Y\times_k \Spec(k[[t]]_n),X).
\]
The natural maps $k[[t]]_{n+1}\to k[[t]]_n$ give rise to an inverse system $X_{n+1}\to X_n$. The \emph{arc space} $X_\infty$ of $X$ is defined to be the limit $X_\infty=\varprojlim\limits_n X_n$, which is again a scheme. If $X=\Spec(A)$ is affine, then so is $X_n$ for all $n\in \N\cup \{\infty\}$. In fact, we have $X_n=\Spec(A_n)$, with $A_n=\UHS^n_k(A)$ the $n$-th Hasse-Schmidt algebra. For general $X$ the scheme $X_n$ is constructed by considering a covering of $X$ by affine opens $\Spec(A)$ and glueing the corresponding Hasse-Schmidt algebras $A_n$ to obtain a sheaf of $\cO_X$-algebras, whose relative spectrum gives $X_n$.

Let $n\in \N$. Then the $n$-th jet space $X_n$ comes equipped with a universal family $\g: U_n:=X_n \times_k \Spec(k[[t]]_n)\to X$. Write $\r$ for the canonical projection $U_n\to X_n$. If $X=\Spec(A)$ is affine, then $\g$ is the map induced by the homomorphism $\g_A:A\to \UHS^n_k(A)[[t]]_n$ and $\r$ is induced by the inclusion $\UHS^n_k(A)\to \UHS^n_k(A)[[t]]_n$. 

Let $\cF$ be a coherent sheaf of $\cO_X$-modules. For $n\in \N\cup\{\infty\}$ we will construct a coherent sheaf of $\cO_{X_n}$-modules $\cF_n$ by glueing the Hasse-Schmidt modules of $\cF(U)$ for $U\subset X$ affine. As in the construction of $X_n$, to be able to glue we use that $S^{-1}\UHS_{A/k}^n(M)\simeq \UHS^n_{S^{-1}A/k}(S^{-1}M)$ for any multiplicative subset $S\subset A$, which in turn follows from \cref{l:hs-module-base-change}. Since for any $W\subset X$ open affine the scheme $W_n\subset X_n$ is open affine we obtain a coherent sheaf $\cF$ on $X_n$.

Alternatively, as done in \cite{def_doc_differentials}, for $n\in \N$ we may construct from $Q_n=\UHS_{A/k}^n(A)=\dual{(A_n[[t]]_n)}$ a sheaf $\cQ_n$ on $U_n=X_n\times_k \Spec(k[[t]]_n)$. It is then easy to check that $\cF_n$ verifies the following universal property:

\begin{theorem}
 Let $n\in \N$, $X$ a scheme of finite type over $k$ and $X_n$ the $n$-th jet space of $X$. For any $\cF\in\Mod_{\cO_X}$ the functor
 \[
  \cG\in \Mod_{\cO_{X_n}} \mapsto \Hom_{\cO_X}(\cF,\g_{*}\r^*\cG)
 \]
 is represented by $\cF_n=\r_{*}(\g^*\cF\otimes \cQ_n)$.
\end{theorem}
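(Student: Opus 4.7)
The natural strategy is to reduce to the affine case and apply the universal property of the Hasse-Schmidt module (\cref{t:universal-hs-module}), which is precisely what built the sheaf $\cF_n$ in the first place. First I would fix an affine open $\Spec(A) \subseteq X$ with $\cF|_{\Spec A} = \widetilde{M}$, and recall that $X_n|_{\Spec(A)} = \Spec(A_n)$ and $U_n|_{\Spec(A)} = \Spec(A_n[[t]]_n)$, with $\g$ and $\r$ coming from the maps $\g_A: A\to A_n[[t]]_n$ and $A_n \hookrightarrow A_n[[t]]_n$, respectively. For any quasi-coherent $\cG$ on $X_n$ corresponding locally to an $A_n$-module $N$, the sheaf $\r^*\cG$ corresponds to $N[[t]]_n = N\otimes_{A_n} A_n[[t]]_n$, and $\g_*\r^*\cG$ corresponds to $N[[t]]_n$ regarded as an $A$-module via $\g_A$. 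Hence
\[
\Hom_{\cO_X}(\cF,\g_*\r^*\cG)\bigr|_{\Spec A} \;\simeq\; \Hom_A(M,N[[t]]_n) \;\simeq\; \HS^n_{d_A^n}(M,N),
\]
and by \cref{t:universal-hs-module} the last set is naturally identified with $\Hom_{A_n}(\UHS^n_{A/k}(M),N)$.

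Next I would identify $\cF_n$ on $\Spec(A_n)$ as $\widetilde{\UHS^n_{A/k}(M)}$ in two compatible ways. The first uses the direct construction via gluing of Hasse-Schmidt modules (already performed just before the statement, using \cref{l:hs-module-base-change} for compatibility with localization). The second uses the description $\cF_n=\r_*(\g^*\cF\otimes\cQ_n)$: on $U_n=\Spec(A_n[[t]]_n)$ we have $\g^*\cF\leftrightarrow M\otimes_A A_n[[t]]_n$ and $\cQ_n\leftrightarrow Q_n = \dual{(A_n[[t]]_n)}$, so
\[
\g^*\cF\otimes_{\cO_{U_n}}\cQ_n \;\leftrightarrow\; \bigl(M\otimes_A A_n[[t]]_n\bigr)\otimes_{A_n[[t]]_n} Q_n \;\simeq\; M\otimes_A Q_n \;=\; \UHS^n_{A/k}(M),
\]
and $\r_*$ amounts to restricting scalars along $A_n\hookrightarrow A_n[[t]]_n$, yielding $\widetilde{\UHS^n_{A/k}(M)}$ on $\Spec(A_n)$ as desired. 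Thus on each affine open
\[
\Hom_{\cO_{X_n}}(\cF_n,\cG)\bigr|_{\Spec(A_n)} \;\simeq\; \Hom_{A_n}(\UHS^n_{A/k}(M),N) \;\simeq\; \Hom_{\cO_X}(\cF,\g_*\r^*\cG)\bigr|_{\Spec A},
\]
so the two functors agree locally.

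Finally I would check that these local bijections are natural in $\cG$ and compatible with restriction to smaller affine opens, so they glue to a single natural isomorphism of functors on $\Mod_{\cO_{X_n}}$. Compatibility with restriction is exactly the content of \cref{l:hs-module-base-change}: localizing $A\to S^{-1}A$ turns $\UHS^n_{A/k}(M)$ into $\UHS^n_{S^{-1}A/k}(S^{-1}M)$ via base change along $A_n\to (S^{-1}A)_n$, which matches the restriction behaviour of both $\cF_n$ and $\g_*\r^*\cG$. The main potential obstacle is precisely this patching step—ensuring that the identification of $\g^*\cF\otimes\cQ_n$ with $\UHS^n_{A/k}(M)$ in the affine case is functorial with respect to localizations of $A$ and respects the cocycle condition on overlaps—but this reduces to the already-established base change property, so no further work is needed.
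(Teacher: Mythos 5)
Your proposal is correct and follows exactly the route the paper intends (the paper merely says the verification is ``easy to check''): reduce to an affine chart $\Spec(A)$, identify $\Hom_A(M,N[[t]]_n)\simeq \HS^n_{d^n_A}(M,N)\simeq \Hom_{A_n}(\UHS^n_{A/k}(M),N)$ via \cref{t:universal-hs-module}, match $\r_*(\g^*\cF\otimes\cQ_n)$ with the glued sheaf of Hasse--Schmidt modules, and use \cref{l:hs-module-base-change} to patch over an affine cover. The only point worth making explicit in a written-up version is that the functor in the statement is a global $\Hom$, so the local identifications are assembled via the limit over the cover, which your compatibility check with localization already justifies.
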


If $\cF\in \Mod_{\cO_X}$ is locally free, then so is $\cF_n \in \Mod_{\cO_{X_n}}$ by \cref{r:hs-module-proj-free}. In this case, the global version of \cref{t:hs-sym-alg} says the following:

\begin{lemma}
 Let $X$ be a scheme of finite type over a field $k$ and $n\in \N$. Then the assignment $\cF\mapsto \cF_n$ gives a functor $\Coh(X)\to \Coh(X_n)$ which preserves the property of being locally free. Moreover, if $E\to X$ is the vector bundle corresponding to a locally free $\cF\in\Coh(X)$, then its jet space $E_n\to X_n$ is the vector bundle corresponding to $\cF_n\in\Coh(X_n)$.
\end{lemma}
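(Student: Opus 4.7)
The plan is to reduce everything to the affine case and invoke \cref{t:hs-sym-alg}, \cref{r:dual-isomorphic} and \cref{l:hs-module-base-change}; the proof then amounts to checking compatibility with the gluing data that defines $X_n$ and $\cF_n$. No new ideas beyond what has been established in this section are needed.

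First I would verify functoriality. On an affine open $U = \Spec(A) \subset X$, a morphism $\cF \to \cG$ is given by an $A$-linear map $M \to N$, which by functoriality of the Hasse-Schmidt module yields an $A_n$-linear map $\UHS^n_{A/k}(M) \to \UHS^n_{A/k}(N)$. The base-change formula \cref{l:hs-module-base-change} ensures that these local morphisms are compatible with localization, so they glue to a morphism $\cF_n \to \cG_n$ on $X_n$. Preservation of local freeness is equally immediate: on a trivialization $U = \Spec(A)$ with $\cF|_U$ corresponding to $M \simeq A^r$, \cref{r:dual-isomorphic} (or equivalently \cref{r:hs-module-proj-free}) gives $\UHS^n_{A/k}(M) \simeq M \otimes_A A_n[[t]]_n \simeq A_n^{r(n+1)}$, so $\cF_n|_{U_n}$ is free of rank $r(n+1)$ on the affine open $U_n = \Spec(A_n) \subset X_n$.

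For the vector bundle statement, write $V(\cF)$ for the relative spectrum $\Spec(\Sym_{\cO_X}(\cF))$ of the symmetric algebra, so that $E = V(\cF)$. Affine-locally on $X = \Spec(A)$ with $\cF$ corresponding to $M$, this reads $E = \Spec(\Sym_A(M))$, whence by definition $E_n = \Spec(\UHS^n_k(\Sym_A(M)))$. The isomorphism supplied by \cref{t:hs-sym-alg} then identifies $E_n$ with $\Spec(\Sym_{A_n}(\UHS^n_{A/k}(M)))$, which is precisely the restriction of $V(\cF_n)$ to $U_n$, over $X_n$. Because \cref{t:hs-sym-alg} is natural both in $M$ and in the base algebra $A$, this affine-local identification is compatible with the localization maps used to construct both $X_n$ (via the Hasse-Schmidt algebra functor) and $\cF_n$ (via \cref{l:hs-module-base-change}), and therefore glues to a global isomorphism $E_n \simeq V(\cF_n)$ over $X_n$.

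The only real obstacle is the gluing bookkeeping in the last step, which is routine: it reduces to verifying that the isomorphism of \cref{t:hs-sym-alg} commutes with restriction to principal open subsets, i.e.\ is compatible with base change $A \to S^{-1}A$. Everything else is an immediate consequence of results already proven in this section.
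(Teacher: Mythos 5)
Your proposal is correct and follows exactly the route the paper intends: the paper offers no written proof beyond remarking that the lemma is ``the global version of'' \cref{t:hs-sym-alg}, and your argument---affine-local identification $E_n = \Spec(\UHS^n_k(\Sym_A(M))) \simeq \Spec(\Sym_{A_n}(\UHS^n_{A/k}(M)))$ glued via the naturality of \cref{t:hs-sym-alg} and \cref{l:hs-module-base-change}, with local freeness from \cref{r:dual-isomorphic}---is precisely that globalization spelled out.
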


\begin{remark}
 A similar result holds true for the arc space $X_\infty$ of $X$, only in this case the universal family attached to $X_\infty$ is of the form $\widehat{U_\infty}\to X$, where $\widehat{U_\infty}$ is the formal completion of the scheme $X_\infty\times_k \Spec(k[[t]])$ with respect to the closed subscheme $X_\infty \times \{0\}$.
\end{remark}

The construction of the sheaf $\cF_n$ was used in \cite{def_doc_differentials} to provide a global variant of \cref{c:hs-cotangent}, as well as in \cite{def_doc_nash} to show how the jet space behaves under Nash blow-up. Our hope is that other, similarly useful, formulas can be obtained by studying universal vector bundles on jet spaces, in particular in constructing compactifications. 

We finish with computing explicitly $\cF_n$ for an invertible sheaf $\cF$ on $\P^1$, showing that even in this very easy case the resulting universal vector bundle has potentially interesting structure.

\begin{example}
 Let $X=\P^1$ and and $\cL$ an invertible sheaf on $\P^1$, isomorphic to $\cO(d)$ for some $d\in \Z$. Let $U_0$ and $U_1$ be the standard affine opens of $\P^1$. We write $U_i=\Spec(k[t_i])$, then $\cL \mid_{U_i}$ is generated by $e_i$ and the transition map $\cL \mid_{U_i\cap U_j} \to \cL \mid_{U_j\cap U_i}$ is given by $e_0\mapsto t_1^d e_1$. For $n\in \N$ we want to describe explicitly the sheaf $\cL_n$ on $(\P^1)_n$. To that avail, note that $(\P^1)_n$ has an affine covering given by $(U_i)_n=\Spec(k[t_i^{(0)},\ldots,t_i^{(n)}])$. The restriction $\cL_n \mid_{(U_i)_n}$ is freely generated by the sections $e_i^{(j)}$ for $j=0,\ldots,n$. By functoriality of the Hasse-Schmidt module the transition map is given by
 \[
  e_0^{(j)} \mapsto \D_j\left( (t_1^{(0)})^d e_1^{(j)} \right), 
 \]
 where $\D$ is the universal higher derivation associated to $\cL_n \mid_{(U_1)_n}$. In particular, for $d=1$, it is easy to see that the space of global sections of $\cO(1)_n$ is generated by $e_i^{(j)}$, where $i=0,1$ and $j=0,\ldots,n$. This gives a notion of ``coordinates'' for $(\P^1)_n$. The same obviously holds for any projective space $\P^m$, $m\geq1$.
\end{example}


\begin{thebibliography}{99}


\bibitem{benabou}
J.~B\'enabou.  
\newblock \href{https://doi.org/10.1007/BFb0074299}{Introduction to bicategories}.
\newblock{\em Reports of the Midwest Category Seminar, Lect. Notes in Math. 47}, Springer (1967), 1--77.

\bibitem{def_doc_differentials}
T.~de Fernex, R.~Docampo.  
\newblock \href{https://doi.org/10.1215/00127094-2019-0043}{Differentials on the arc space}.
\newblock{\em Duke Math. J.}  169, no. 2 (2020), 353--396.
\newblock ({\tt  \href{https://arxiv.org/abs/1703.07505}{arXiv:1703.07505}}).

\bibitem{def_doc_nash}
T.~de Fernex, R.~Docampo.  
\newblock \href{https://doi.org/10.5802/aif.3302}{Nash blow-ups of jet schemes}.
\newblock{\em Ann. Inst. Fourier}  69, no. 6 (2019), 2577--2588.
\newblock ({\tt  \href{https://arxiv.org/abs/1712.00911}{arXiv:1712.00911}}).

\bibitem{dold_puppe}
A.~Dold, D.~Puppe.
\newblock Duality, trace and transfer.
\newblock{\em Proceedings of the International Conference on Geometric Topology (Warsaw, 1978)} (1980), 81--102.

\bibitem{nar_course}
L.~Narv\'{a}ez~Macarro. 
\newblock \href{https://personal.us.es/narvaez/course_HS_2018.pdf}{A mini-course on Hasse-Schmidt derivations}.
\newblock Lecture notes, 2018.

\bibitem{rib_hs_I}
P.~Ribenboim.  
\newblock Higher derivations of rings, I.
\newblock{\em Revue Roumaine de Math. Pures et Appl.} 16 (1971), 77--110.

\bibitem{rib_hs_II}
P.~Ribenboim.  
\newblock Higher derivations of rings, II.
\newblock{\em Revue Roumaine de Math. Pures et Appl.} 16 (1971), 245--272.

\bibitem{rib_higher_der}
P.~Ribenboim.  
\newblock Higher order derivations of modules.
\newblock{\em Portugal. Math.} 39, no. 1-4 (1980), 381--397.

\bibitem{schommer_pries}
C.~Schommer-Pries.
\newblock The Classification of Two-Dimensional Extended Topological Field Theories.
\newblock {\em PhD Dissertation 2009}.
\newblock ({\tt  \href{https://arxiv.org/abs/1112.1000v2}{arXiv:1112.1000}}).

\bibitem{vojta}
P.~Vojta.
\newblock Jets via Hasse-Schmidt Derivations.
\newblock{\em Diophantine Geometry, Proceedings, Edizioni della Normale, Pisa} (2007), 335--361.
\newblock ({\tt  \href{https://arxiv.org/abs/math/0407113}{arXiv:0407113}}).

\end{thebibliography}
\end{document}